\definecolor{webgreen}{rgb}{0,.5,0}
\definecolor{webbrown}{rgb}{.6,0,0}
\begin{document}

\begin{center}
\epsfxsize=4in
\end{center}

\newtheorem{theorem}{Theorem}           
\newtheorem{lemma}{Lemma}               
\newtheorem{corollary}{Corollary}
\newtheorem{remark}{Remark}

\begin{center}
\vskip 1cm{\LARGE \bf Explicit Formulas For Generalized Polylogarithmic Integrals, Euler Sums, And BBP-Type Series}
\vskip 1cm {\large
Ali Olaikhan\\
Department of physics\\
University of Thi-Qar, Iraq\\
\href{mailto:alishathri@yahoo.com}{\tt alishathri@yahoo.com} \\}{February 22, 2025}
\end{center} 

\vskip .2in
\begin{abstract}
	This paper explores closed-form expressions for some polylogarithm integrals
	with integrands containing five parameters. These closed form expressions
	are given in terms of the Lerch transcendent
	function, which reduces, in some cases, to well-known special functions, such as the Riemann zeta, Dirichlet eta, Dirichlet beta, the Hurwitz zeta, and the polygamma functions. Related  Euler sums and BBP-type series will also be discussed.
\end{abstract}

\noindent Keywords: Euler sums, Polylogarithmic integrals, Lerch transcendent function, polylogarithm function, Riemann zeta function, Harmonic number, Skew-harmonic number. 

 \begin{center}
 	Subject class: 33B30, 65B10, 40C10, 40G10, 11M06
 \end{center}

\section{Introduction and preliminaries}

In this paper we derive closed-form expressions for the following two generalized integrals (see Theorems \ref{infinity integral1} and \ref{infinity integral2}):
\begin{equation}
	\int_0^\infty\frac{\ln^{q-1}(x)\operatorname{Li}_p(bx^{n})}{x(1-ax)}\mathrm{d} x\quad\text{and}\quad  \int_0^\infty\frac{\ln^{q-1}(x)\operatorname{Li}_p(bx^{2n})}{1-ax^2}\mathrm{d} x,\label{infinity integrals}
\end{equation}
where $p, q,n$ are positive integers, and $a, b = \pm 1$, with the restriction that $q \neq 1$ when $a=1$, as these integrals would otherwise diverge. For the case $b=1$, the Cauchy principal value ($\operatorname{P.V.}$) is considered for these integrals, as the polylogarithm function is defined by \cite[p.\,35]{ali}:
\begin{equation}
	\operatorname{Li}_p(x)=\frac{{(-1)}^{p-1}}{(p-1)!}\int_0^1\frac{x\ln^{p-1}(y)}{1-xy}\mathrm{d} y,\quad x\leqslant1.\label{polylog}
\end{equation}

Using integral manipulation and the polylogarithmic identity given in Lemma \ref{Li(bx)+Li(b/x)}, we can derive from the first integral in (\ref{infinity integrals}) the following integral (see Theorems \ref{from zero to one integral1}):
\begin{equation}
	\int_0^1\frac{\ln^{q-1}(x)\operatorname{Li}_p(bx^{n})}{1-ax}\mathrm{d} x,\label{integral1}
\end{equation}
where $p+q$ is an odd integer, $a, b = \pm 1$, and $n$ is a non-zero integer. 

Similarly, from the second integral in (\ref{infinity integrals}) we can extract the following integral (see Theorem \ref{from zero to one integral2}):	 
\begin{equation}
	\int_0^1\frac{\ln^{q-1}(x)\operatorname{Li}_p(bx^{2n})}{1-ax^2}\mathrm{d} x,\label{integral2}
\end{equation}	 	 
where $a={(-1)}^{p+q-1}$, $b = \pm 1$, and $n$ is a non-zero integer.

The integral in (\ref{integral1}) is related to the Euler sum (see Theorem \ref{thm of H_n}):
\begin{equation}
	\sum_{k=1}^\infty\frac{a^k \mathcal{H}_{nk}^{(p)}(b)}{k^q},\label{Euler sums1}
\end{equation}
as well as to the BBP-type series (see Theorem \ref{bbp sum1}):
\begin{equation}
	\sum_{k=1}^\infty {(a^n)}^{k}\, \mathcal{H}_k^{(p)}(b)\sum_{j=1}^{n}\frac{a^{j-1}}{{(nk+j)}^q},\label{BBP type1}
\end{equation}
where $\mathcal{H}_{k}^{(p)}(b)=\sum_{m=1}^{k}\frac{b^{m-1}}{m^p}$, noting that the case $b=1$ gives the $k$th generalized harmonic numbers of order $p$, denoted by $H_{k}^{(p)}$, while the case $b=-1$ gives the $k$th generalized skew-harmonic numbers of order $p$, denoted by $\overline{H}_{k}^{(p)}$. 

The integral in (\ref{integral2}) is related to the Euler sum (see Theorem \ref{thm of O_n}):
\begin{equation}
	\sum_{k=1}^\infty\frac{a^k \mathcal{O}_{nk}^{(p)}(b)}{k^q},\label{Euler sums2}
\end{equation}
as well as to the BBP-type series (see Theorem \ref{bbp sum2}):	
\begin{equation}
	\sum_{k=1}^\infty {(a^n)}^{k}\, \mathcal{H}_k^{(p)}(b)\sum_{j=1}^{n}\frac{a^{j-1}}{{(2nk+2j-1)}^q},\label{BBP type2}
\end{equation}
where $\mathcal{O}_{k}^{(p)}(b)=\sum_{m=1}^{k}\frac{b^{m-1}}{(2m-1)^p}$, noting that the case $b=1$ gives the $k$th generalized odd harmonic numbers of order $p$, denoted by $O_{k}^{(p)}$, while the case $b=-1$ gives the $k$th generalized odd skew-harmonic numbers of order $p$, denoted by $\overline{O}_{k}^{(p)}$. 

The cases where \( n = 1 \) and \( a, b = \pm 1 \) in (\ref{Euler sums1}) were first introduced by Flajolet and Salvy \cite{flaj} using contour integration methods.  

The case \((p, q, a, b) = (1, 1, -1, 1)\) in (\ref{BBP type2}), where \(n\) is an odd positive integer, was derived by Batir \cite[p.\,19]{batir}:
\[\sum_{k=1}^\infty {(-1)}^k H_k \sum_{j=1}^n \frac{{(-1)}^{j-1}}{2nk + 2j - 1} 
= nG - \frac{(n+1)\pi}{4} \ln(2) 
- \frac{\pi}{2} \sum_{k=0}^{\frac{n-3}{2}} \ln\left(1 + \sqrt{\frac{1 - \varphi_k}{2}}\right).\]
Here, \(G\) is the Catalan constant and \(\varphi_k = \cos\left(\frac{(2k + 1)\pi}{n}\right)\).

It is important to note that the generalization presented in Theorem \ref{bbp sum2} does not produce Batir's result because the case $p=b=1$ is undefined. However, apart from this exception, the generalization holds for all other cases.

The BBP-type formula takes the general form:
\[\sum_{k=0}^\infty \frac{1}{p^k}\sum_{j=1}^{n}\frac{a_j}{{(kn+j)}^q},\]
where $p,q,n$ are positive integers, and $(a_1,a_2,..., a_n)\in\mathbb{Z}^{n}$. This type of series is named after Bailey, Borwein, and Plouffe, who first discovered the BBP-type formula for the constant $\pi$ \cite[p. 903]{Bailey}:
\[\pi=\sum_{k=0}^\infty\frac{1}{{16}^{k}}\left(\frac{4}{8k+1}-\frac{2}{8k+4}-\frac{1}{8k+5}-\frac{1}{8k+6}\right).\]	

The closed-form expressions for the generalizations derived in this paper are expressed in terms of the Lerch transcendent function, which is defined by
\begin{equation}
	\Phi(c,q,\alpha) = \sum_{n=0}^\infty \frac{c^n}{{(n+\alpha)}^q}, \label{series form}
\end{equation}
where $\alpha\ne0,-1,-2,..$, $q\geqslant1$, and $c=\pm1$ , with $q\ne1$ when $c=1$, as the sum woud otherwise diverge.

The case $c=1$ in (\ref{series form}) is the definition of the Hurwitz zeta function:
\[\Phi(1,q,\alpha)=\sum_{n=0}^\infty \frac{1}{{(n+\alpha)}^q}=\zeta(q,\alpha).\]
Employing the summation identity \cite[p. 4]{ali}:
\begin{equation}
	\sum_{n=0}^\infty a_{2n}=\frac12\sum_{n=0}^\infty(1+{(-1)}^{n}) a_n,\label{a_(2k) to a_k}
\end{equation}
we can find the case $c=-1$ in (\ref{series form}):
\begin{align*}
	\Phi(-1,q,\alpha)&=\sum_{n=0}^\infty \frac{(-1)^n}{{(n+\alpha)}^q}\\
	&=2\sum_{n=0}^\infty \frac{1}{{(2n+\alpha)}^q}-\sum_{n=0}^\infty \frac{1}{{(n+\alpha)}^q}\\
	&=2^{1-q} \sum_{n=0}^\infty \frac{1}{{(n+\frac{\alpha}{2})}^q}-\sum_{n=0}^\infty \frac{1}{{(n+\alpha)}^q}\\
	&=2^{1-q}\zeta\left(q,\frac{\alpha}{2}\right)-\zeta(q,\alpha).
\end{align*}    

Other special functions that are related to the Lerch transcendent function are:\\
The polylogarithm function
\[\operatorname{Li}_s(z) = \sum_{n=1}^\infty \frac{z^n}{n^s}=\sum_{n=0}^\infty \frac{z^{n+1}}{{(n+1)}^s} = z \, \Phi(z, s, 1),\]
the Riemann zeta function
\[\zeta(s) = \sum_{n=1}^\infty \frac{1}{n^s} =\sum_{n=0}^\infty \frac{1}{{(n+1)}^s}= \Phi(1, s, 1),\]
the Dirichlet eta function
\[\eta(s) = \sum_{n=1}^\infty \frac{{(-1)}^{n-1}}{n^s} = \sum_{n=0}^\infty \frac{{(-1)}^{n}}{{(n+1)}^s} = \Phi(-1, s, 1),\]
the Dirichlet lambda function
\[\lambda(s) = \sum_{n=0}^\infty \frac{1}{{(2n+1)}^s}=2^{-q}\sum_{n=0}^\infty \frac{1}{{(n+\frac12)}^s}= 2^{-s} \, \Phi(1, s, \tfrac{1}{2}),\]
and the Dirichlet beta function
\[\beta(s) = \sum_{n=0}^\infty \frac{{(-1)}^{n}}{{(2n+1)}^s}=2^{-q}\sum_{n=0}^\infty \frac{{(-1)}^{n}}{{(n+\frac12)}^s} = 2^{-s} \, \Phi(-1, s, \tfrac{1}{2}).\]
At odd values, the Dirichlet beta function reduces to powers of $\pi$ \cite[p.32]{ali}:
\[\beta(2s+1)=\frac{|E_{2s}|}{2(2s)!}\left(\frac{\pi}{2}\right)^{2s+1}.\]

Some other functions can be represented by the Lerch transcendent function, but they will not be discussed here as  they are unrelated to this work.

Most of the identities derived here are believed to be new in the literature, and they provide results in closed form that cannot be obtained by mathematical software such as \textit{Mathematica} or \textit{Maple}.

\section{Lemmas}
\begin{lemma}\label{Li(bx)+Li(b/x)}
	Let $p\in\mathbb{Z}^{+}$, $c=\pm 1$ and $x\in (0,1)$. Then
	\[\operatorname{Li}_{p}(cx)+{(-1)}^{p}\operatorname{Li}_{p}\left(\frac{c}{x}\right)=2c\sum_{k=0}^{\lfloor{\frac{p}{2}}\rfloor} \frac{\ln^{p-2k}(x)}{(p-2k)!}\,\Phi(c,2k,1),\]
	where the real part of $\operatorname{Li}_p(\frac{c}{x})$ is considered when $c=1$.
\end{lemma}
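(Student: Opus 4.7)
The plan is induction on $p$, driven by the differential relation between consecutive polylogarithms. Let $f_p(x)=\operatorname{Li}_p(cx)+(-1)^p\operatorname{Li}_p(c/x)$ (with the real part taken when $c=1$). Since $\tfrac{d}{dy}\operatorname{Li}_p(y)=\operatorname{Li}_{p-1}(y)/y$, the chain rule (applied to the real part in the $c=1$ case) gives
\[f_p'(x)=\frac{\operatorname{Li}_{p-1}(cx)}{x}-\frac{(-1)^p\operatorname{Li}_{p-1}(c/x)}{x}=\frac{f_{p-1}(x)}{x}.\]
Thus, once the formula is known at level $p-1$, integrating $f_{p-1}(x)/x$ term by term with respect to $\ln(x)$ determines $f_p(x)$ up to an additive constant.

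The base case $p=1$ is a direct computation. On $(0,1)$,
\[\operatorname{Li}_1(cx)-\operatorname{Li}_1(c/x)=-\ln(1-cx)+\ln(1-c/x)\]
equals $-\ln(x)$ when $c=-1$, and equals $-\ln(x)$ when $c=1$ after discarding the imaginary $i\pi$ coming from the principal branch of $\ln(1-1/x)$. Comparing with $2c\ln(x)\,\Phi(c,0,1)$ identifies the value of the Lerch transcendent at $q=0$ via the standard analytic continuation, namely $\Phi(1,0,1)=\zeta(0)=-1/2$ and $\Phi(-1,0,1)=\eta(0)=1/2$.

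For the inductive step from $p-1$ to $p$, integrating the hypothesis against $dx/x$ yields
\[f_p(x)=2c\sum_{k=0}^{\lfloor(p-1)/2\rfloor}\frac{\ln^{p-2k}(x)}{(p-2k)!}\,\Phi(c,2k,1)+C,\]
which matches the target sum exactly when $p$ is odd (since $\lfloor(p-1)/2\rfloor=\lfloor p/2\rfloor$) and is short by the $k=p/2$ term when $p$ is even. The constant $C$ is pinned down by evaluating at $x=1$ (legitimate for $p\geqslant 2$, since $\operatorname{Li}_p(c)$ is then finite): every logarithmic term vanishes, leaving $f_p(1)=(1+(-1)^p)\operatorname{Li}_p(c)$. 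When $p$ is odd this forces $C=0$; when $p$ is even, $f_p(1)=2\operatorname{Li}_p(c)=2c\,\Phi(c,p,1)$, supplying precisely the missing $k=p/2$ term and extending the summation range to $\lfloor p/2\rfloor$.

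The main subtlety I anticipate is the $c=1$ case, where $\operatorname{Li}_p(1/x)$ sits on the branch cut of the polylogarithm for $x\in(0,1)$, so the real-part convention must be maintained consistently in both the differentiation step and the boundary evaluation as $x\to 1^-$. A secondary point worth flagging is that the $k=0$ term of the formula invokes $\Phi(c,0,1)$, which lies outside the series definition of the Lerch transcendent stated in the introduction; this is handled by the standard analytic continuation, and it is the base case $p=1$ that \emph{forces} the two values $\Phi(\pm 1,0,1)=\mp 1/2$ required for consistency.
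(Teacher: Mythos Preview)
Your induction-by-integration argument is correct and is essentially the same approach the paper indicates: the paper cites Lewin for $c=-1$ and, for $c=1$, says to ``repeatedly integrate'' the dilogarithm reflection identity (itself obtained by differentiating and integrating back), which is exactly your scheme $f_p'(x)=f_{p-1}(x)/x$ with the constant of integration fixed at $x=1$. The only cosmetic difference is that you anchor the induction at $p=1$ (thereby recovering the analytically continued values $\Phi(\pm 1,0,1)=\mp\tfrac12$), whereas the paper starts from the $p=2$ dilogarithm identity.
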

\begin{proof}
	The case $c=-1$ is given by Lewin \cite[p.192]{Lewin}, while the case $c=1$ can be proved by repeatedly integrating the dilogarithm identity:
	\[\operatorname{Li}_2(x)+\mathfrak{R}\operatorname{Li}_2\left(\frac{1}{x}\right)=2\zeta(2)-\frac12\ln^2(x),\quad x\in(0,1),\]
	which is derived by differentiating $\operatorname{Li}_2(1/x)$ and then integrating back. 
\end{proof}
\begin{remark}\label{first remark}
	For the case $c=1$ and $k=0$ in Lemma \ref{Li(bx)+Li(b/x)}, view $\Phi(1,0,1)$ as $\zeta(0)=-1/2$ due to the analytic continuation of the Riemann zeta function, and consider the real part of $\operatorname{Li}_{p}(1/x)$. \textit{Mathematica} does not directly evaluate $\mathtt{LerchPhi[1, 0, 1]}$ as $\mathtt{-1/2}$, but it provides this value if $\mathtt{LerchPhi[1, 0, 1]}$ is expressed as a limit: 
	\[\mathtt{Limit[LerchPhi[1, k, 1], {k-> 0}] = -1/2}.\]
\end{remark}
\begin{lemma}
	\label{x^(-s)}
	Let $c=\pm 1$ and $s\in(0,1)$. Then we have
	\[\int_0^\infty\frac{x^{s-1}}{1-cx}\mathrm{d} x=-2c\sum_{n=0}^\infty \Phi(c,2n,1) s^{2n-1}.\] 
	The integral must be understood as the Cauchy principal value when $c=1$.
\end{lemma}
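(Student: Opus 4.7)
The plan is to split $\int_0^\infty = \int_0^1 + \int_1^\infty$, use the substitution $x \mapsto 1/x$ on the tail to bring it onto $(0,1)$, and then expand the geometric series $(1-cx)^{-1} = \sum_{k\ge 0} c^k x^k$ and integrate termwise. Using $(u-c)^{-1} = -c/(1-cu)$ (valid since $c^{2}=1$), a short computation yields
\[\int_0^\infty\frac{x^{s-1}}{1-cx}\mathrm{d} x \;=\; \frac{1}{s} + \sum_{k=1}^{\infty}c^{k}\!\left(\frac{1}{k+s}-\frac{1}{k-s}\right) \;=\; \frac{1}{s} - 2s\sum_{k=1}^{\infty}\frac{c^{k}}{k^{2}-s^{2}},\]
which is essentially the Mittag--Leffler expansion of $\pi\cot(\pi s)$ when $c=1$ and of $\pi/\sin(\pi s)$ when $c=-1$; this serves as a useful sanity check.

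Since $s\in(0,1)$, I then expand each summand as a geometric series in $s^{2}/k^{2}$,
\[\frac{1}{k^{2}-s^{2}} \;=\; \sum_{n=0}^{\infty}\frac{s^{2n}}{k^{2n+2}},\]
and interchange the order of summation (justified by absolute convergence of the resulting double series for $s\in(0,1)$). The inner sum becomes $\sum_{k\ge1}c^{k}/k^{2n+2}=c\,\Phi(c,2n+2,1)$, converting the expression into $\tfrac{1}{s} - 2c\sum_{m=1}^{\infty}\Phi(c,2m,1)\,s^{2m-1}$. Finally, I absorb the stand-alone $1/s$ as the $m=0$ term by invoking Remark \ref{first remark}: set $\Phi(1,0,1)=\zeta(0)=-1/2$ and note $\Phi(-1,0,1)=\eta(0)=1/2$, so that $-2c\,\Phi(c,0,1)\,s^{-1}=1/s$ in both cases, producing the claimed formula.

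The main delicate point is the Cauchy principal value when $c=1$. The change of variables $x\mapsto 1/u$ sends the window $(1+\varepsilon,\infty)$ to $(0,1/(1+\varepsilon))$, not to $(0,1-\varepsilon)$; however, the discrepancy in endpoints is $O(\varepsilon^{2})$, so it vanishes in the limit and the two principal-value halves glue together without correction. After this check, the combined integrand $(x^{s-1}-x^{-s})/(1-x)$ on $(0,1)$ has a removable singularity at $x=1$, so the termwise integration proceeds exactly as in the $c=-1$ case.
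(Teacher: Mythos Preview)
Your proof is correct and takes a genuinely different route from the paper. The paper quotes a closed-form evaluation of $\int_0^\infty x^{s-1}/(x+c)\,\mathrm{d}x$ from an external reference (yielding $\pi\cot(\pi s)$ for $c=1$ and $\pi\csc(\pi s)$ for $c=-1$), and then inserts the known Taylor expansions $\pi\cot(\pi s)=-2\sum_{n\ge0}\zeta(2n)s^{2n-1}$ and $\pi\csc(\pi s)=2\sum_{n\ge0}\eta(2n)s^{2n-1}$. You instead work from first principles: split at $x=1$, invert, expand geometrically, and obtain the Mittag--Leffler form $\tfrac{1}{s}-2s\sum_{k\ge1}c^{k}/(k^{2}-s^{2})$ directly, then expand each term in powers of $s^{2}$. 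Your approach is self-contained and more elementary, effectively re-deriving the cotangent/cosecant partial-fraction expansions along the way; the paper's is shorter but leans on two cited identities. Your principal-value bookkeeping is fine (the $O(\varepsilon^{2})$ endpoint mismatch against a $O(1/\varepsilon)$ integrand gives an $O(\varepsilon)$ correction), and the rearrangement from $\sum_{k\ge0}c^{k}\bigl(\tfrac{1}{k+s}-\tfrac{c}{k+1-s}\bigr)$ to $\tfrac{1}{s}+\sum_{k\ge1}c^{k}\bigl(\tfrac{1}{k+s}-\tfrac{1}{k-s}\bigr)$ is legitimate since the partial sums differ only by the single term $-c^{N+1}/(N+1-s)\to0$.
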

\begin{proof}
	From Cornel's second book \cite[p.\,258]{book2}, we have that
	\begin{equation}\label{grad}
		\int_0^\infty\frac{x^{s-1}}{x+c}\mathrm{d} x=
		\begin{cases}
			& -(-c)^{s-1}\pi\cot(\pi s)\quad c<0;\\
			&  c^{s-1}\pi\csc(\pi s)\,\,\,\quad\qquad c>0.
		\end{cases}
	\end{equation}
	The lemma follows from considering the cases $c=\pm1$, then expanding  the cotangent and cosecant functions in series:
	\[	\pi \cot(\pi x)=-2\sum_{n=0}^\infty \zeta(2n) x^{2n-1},\quad\pi\csc(\pi x)=2\sum_{n=0}^\infty \eta(2n) x^{2n-1},\qquad |x|<1,\]
	where the series expansion of $\cot(\pi x)$ is given in \cite[p.\,29]{ali}, and the series expansion of $\csc(\pi x)$ follows from using the trigonometric identity $\csc(x)=\cot\left(\frac{x}{2}\right)-\cot(x)$.
\end{proof} 
\begin{lemma}\label{lerch integral}
	Let $q\in\mathbb{Z}^{+}$, $s,r>0$, and $c=\pm1$, with $q\neq1$ when $c=1$. Then
	\[\int_0^1\frac{x^{s-1}\ln^{q-1}(x)}{1-cx^r}\mathrm{d} x=\frac{{(-1)}^{q-1}(q-1)!}{r^{q}}\,\Phi\left(c,q,\tfrac{s}{r}\right).\]
\end{lemma}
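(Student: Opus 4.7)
The plan is to expand the integrand as a geometric series, swap the sum and integral, and reduce each term to a standard log-power integral.

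First I would write $\frac{1}{1-cx^r}=\sum_{n=0}^{\infty}c^n x^{rn}$, which is valid for $x\in(0,1)$ since $|cx^r|<1$ there. Multiplying by $x^{s-1}\ln^{q-1}(x)$ and integrating term by term (assuming the interchange is justified; see below), the lemma reduces to evaluating
\[
I_n:=\int_0^1 x^{s+rn-1}\ln^{q-1}(x)\,\mathrm{d} x.
\]
A substitution $u=-\ln(x)$ turns this into a Gamma integral: $I_n=(-1)^{q-1}\int_0^{\infty}u^{q-1}e^{-(s+rn)u}\,\mathrm{d} u=\frac{(-1)^{q-1}(q-1)!}{(s+rn)^q}$. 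Summing in $n$ and factoring out $r^q$ from the denominator gives
\[
\sum_{n=0}^{\infty}c^n I_n=\frac{(-1)^{q-1}(q-1)!}{r^q}\sum_{n=0}^{\infty}\frac{c^n}{(n+s/r)^q}=\frac{(-1)^{q-1}(q-1)!}{r^q}\,\Phi\!\left(c,q,\tfrac{s}{r}\right),
\]
which matches the claimed formula.

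The only delicate point, and the step I would treat most carefully, is justifying the interchange of summation and integration near $x=1$. For $c=-1$, the partial sums of $\sum c^n x^{rn}$ are uniformly bounded on $[0,1]$, so dominated convergence applies with $|x^{s-1}\ln^{q-1}(x)|/(1+x^r)$ as majorant. For $c=1$, the geometric series diverges at $x=1$, but since the terms $c^n x^{rn}\, x^{s-1}|\ln(x)|^{q-1}$ are nonnegative, monotone convergence (Tonelli) applies; the resulting series $\sum_{n}(s+rn)^{-q}$ converges precisely because $q\neq 1$ when $c=1$, matching the hypothesis of the lemma. This verifies that the termwise integration is legitimate in both cases, and completes the proof.
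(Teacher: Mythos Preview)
Your proof is correct and follows essentially the same route as the paper: expand $\frac{1}{1-cx^r}$ as a geometric series, interchange sum and integral, evaluate the resulting log-power integral, and identify the series with $\Phi(c,q,s/r)$. The only differences are that you derive $\int_0^1 x^{s+rn-1}\ln^{q-1}(x)\,\mathrm{d} x$ via the substitution $u=-\ln x$ whereas the paper cites it, and you justify the interchange more carefully (dominated convergence for $c=-1$, Tonelli for $c=1$) than the paper's blanket appeal to uniform convergence.
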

\begin{proof}
	This integral is derived by expanding $\frac{1}{1-c{x}^{r}}$ in a Taylor series, rearranging the order of summation and integration, which is justified by the uniform convergence theorem, then using the result \cite[p.\, 13]{ali}:
	\begin{equation}
		\int_0^1 x^{n-1} \ln^{q-1}(x)\mathrm{d} x=\frac{(-1)^{q-1}(q-1)!}{n^q},\label{classical}
	\end{equation}
	we obtain
	\[\int_0^1\frac{x^{s-1}\ln^{q-1}(x)}{1-cx^r}\mathrm{d} x=\sum_{n=0}^\infty c^n\int_0^1 x^{s+rn-1}\ln^{q-1}(x)\mathrm{d} x\]
	\[=(-1)^{q-1}(q-1)!\sum_{n=0}^\infty\frac{c^n}{{(rn+s)}^{q}}=\frac{(-1)^{q-1}(q-1)!}{r^{q}}\sum_{n=0}^\infty\frac{c^n}{{(n+\tfrac{s}{r})}^{q}}.\]
	The proof follows from (\ref{series form}).
\end{proof}

\begin{lemma} \label{theta integral}
	Let $q\in\mathbb{Z}^+$, $c=\pm1$, and $\frac{s}{r}\in(0,1)$. Then
	\[\int_0^\infty\frac{x^{s-1}\ln^{q-1}(x)}{1-cx^r}\mathrm{d} x=\frac{{(-1)}^{q-1}(q-1)!}{r^{q}}\,\Theta(c,q,s,r),\]
	where
	\begin{equation}
		\Theta(c,q,s,r)=\Phi\left(c,q,\tfrac{s}{r}\right)+c{(-1)}^{q}\Phi\left(c,q,\tfrac{r-s}{r}\right),\label{theta}
	\end{equation}
	from which it follows that
	\[\Theta(c,q,1,2)=(1+c{(-1)}^{q})\,\Phi(c,q,\tfrac12).\]
\end{lemma}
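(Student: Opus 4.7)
The plan is to split the improper integral at $x=1$ and relate the piece over $[1,\infty)$ to a second application of Lemma \ref{lerch integral} via the reciprocal substitution $x \mapsto 1/x$. Concretely, I would write
\[\int_0^\infty \frac{x^{s-1}\ln^{q-1}(x)}{1-cx^r}\,\mathrm{d} x = \int_0^1 \frac{x^{s-1}\ln^{q-1}(x)}{1-cx^r}\,\mathrm{d} x + \int_1^\infty \frac{x^{s-1}\ln^{q-1}(x)}{1-cx^r}\,\mathrm{d} x,\]
with the usual PV interpretation at $x=1$ when $c=1$; the splitting is legitimate since the principal value commutes with a decomposition whose cut point is the singularity itself.

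The first piece is handled directly by Lemma \ref{lerch integral}, giving $\frac{(-1)^{q-1}(q-1)!}{r^q}\Phi(c,q,s/r)$. For the second piece, I would substitute $x = 1/y$. Then $\mathrm{d} x = -y^{-2}\,\mathrm{d} y$, $\ln(x) = -\ln(y)$, and
\[\frac{1}{1 - c x^{r}} = \frac{y^r}{y^r - c} = \frac{-c\,y^r}{1 - c y^r},\]
using $c^2 = 1$. Collecting powers of $y$ and the sign $(-1)^{q-1}$ from $\ln^{q-1}$, the integral transforms into
\[c(-1)^q \int_0^1 \frac{y^{(r-s)-1}\ln^{q-1}(y)}{1 - c y^r}\,\mathrm{d} y.\]
The hypothesis $s/r \in (0,1)$ is exactly what ensures $r-s > 0$, so Lemma \ref{lerch integral} applies to this new integral with parameter $r-s$ in place of $s$, yielding $c(-1)^q \cdot \frac{(-1)^{q-1}(q-1)!}{r^q}\Phi(c,q,(r-s)/r)$. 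Adding the two contributions gives the claimed closed form, with $\Theta$ as defined in (\ref{theta}).

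For the corollary, set $s=1$ and $r=2$. Then $s/r = (r-s)/r = 1/2$, so both Lerch terms coincide and $\Theta(c,q,1,2) = (1 + c(-1)^q)\Phi(c,q,\tfrac12)$ is immediate.

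The main obstacle is keeping the bookkeeping honest in the PV case $c=1$: one must verify that the pole at $x=1$ is the only local issue, that the substitution $x \mapsto 1/y$ sends a symmetric deleted neighborhood $(1-\varepsilon, 1+\varepsilon)$ to a symmetric deleted neighborhood (to first order in $\varepsilon$) so that the PV is preserved under the change of variables, and that the $s/r \in (0,1)$ assumption guarantees absolute convergence at both endpoints $0$ and $\infty$ away from the pole. Everything else is routine algebra on $c = \pm 1$ and the sign $(-1)^{q-1}$.
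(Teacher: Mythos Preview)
Your proposal is correct and follows essentially the same approach as the paper: split the integral at $x=1$, apply the substitution $x\mapsto 1/x$ to the $[1,\infty)$ piece, and invoke Lemma~\ref{lerch integral} on both resulting integrals over $(0,1)$. The paper presents this in a single line without the detailed sign-tracking or the PV discussion, but the method and the algebra are identical to yours.
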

\begin{proof}
	We begin with breaking up the integral at $x=1$, then letting $x\to1/x$ in the second integral, we find
	\[\int_0^\infty\frac{x^{s-1}\ln^{q-1}(x)}{1-cx^r}\mathrm{d} x=\int_0^1\frac{x^{s-1}\ln^{q-1}(x)}{1-cx^r}\mathrm{d} x+c{(-1)}^{q}\int_0^1\frac{x^{r-s-1}\ln^{q-1}(x)}{1-cx^r}\mathrm{d} x.\]
	The proof follows from Lemma \ref{lerch integral}.
\end{proof}

Additionally, by taking $(q,c)=(1,1)$ in this Lemma, considering the principal value of the integral, and then using the identity (\ref{grad}), the case $c=1$, we find
\[\Theta(1,1,s,r)=\operatorname{P.V.}\int_0^\infty\frac{r\,x^{s-1}}{1-x^r}\mathrm{d} x=\operatorname{P.V.}\int_0^\infty\frac{x^{s/r-1}}{1-x}\mathrm{d} x=\pi\cot\left(\tfrac{\pi s}{r}\right).\] 

\begin{lemma} \label{tough integral}
	Let $q,n\in\mathbb{Z}^{+}$, $y\in(0,1)$, $a,b=\pm1$, with $q\neq1$ when $a=1$. Then
	\[\int_0^\infty \frac{\ln^{q-1}(x)}{x}\left(\frac{1}{1-ax}-\frac{1}{1-byx^{n}}\right)\mathrm{d} x\]
	\[=-a(1+{(-1)}^{q})(q-1)!\,\Phi(a,q,1)
	+\frac{2b{(-1)}^{q} (q-1)!}{n^{q}}\sum_{j=0}^{\lfloor\frac{q}{2}\rfloor}\frac{\ln^{q-2j}(y)}{(q-2j)!}\,\Phi(b,2j,1).\]
\end{lemma}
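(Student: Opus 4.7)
The strategy is to introduce a Mellin parameter $s$ and use the identity $\ln^{q-1}(x)/x = \bigl(\tfrac{d}{ds}\bigr)^{q-1} x^{s-1}\big|_{s=0}$, so that the target integral becomes $(q-1)!$ times the coefficient of $s^{q-1}$ in the Taylor expansion about $s=0$ of
\[\Psi(s) := \int_0^\infty x^{s-1}\!\left(\frac{1}{1-ax} - \frac{1}{1-byx^{n}}\right)\mathrm{d} x = F_1(s) - F_2(s),\]
where $F_1$ and $F_2$ denote the two individual Mellin integrals. Although each $F_i(s)$ has a simple pole at $s=0$, their difference is analytic there (checked below), and standard differentiation under the integral sign then gives $\Psi^{(q-1)}(0) = \int_0^\infty \frac{\ln^{q-1}(x)}{x}(\cdots)\mathrm{d} x$.

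Lemma \ref{x^(-s)} applies directly to give $F_1(s) = -2a\sum_{k\geq 0}\Phi(a,2k,1)\,s^{2k-1}$. For $F_2$, I would first substitute $x = y^{-1/n}u$ (valid since $y>0$) to factor out $y^{-s/n}$, and then let $v = u^n$, reducing the $u$-integral to Lemma \ref{x^(-s)} with $c=b$ and argument $s/n$; this produces
\[F_2(s) = -2b\,y^{-s/n}\sum_{k\geq 0}\frac{\Phi(b,2k,1)}{n^{2k}}\,s^{2k-1}.\]
The pole cancellation is then transparent: by Remark \ref{first remark}, $\Phi(1,0,1)=\zeta(0)=-\tfrac12$ and $\Phi(-1,0,1)=\eta(0)=\tfrac12$, so $-2c\,\Phi(c,0,1)=1$ for either $c=\pm 1$, meaning both $F_1$ and $F_2$ have residue $1$ at $s=0$ and $\Psi(s)$ is analytic there.

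It remains to read off the coefficient of $s^{q-1}$ in $\Psi(s)$. In $F_1$ only odd powers of $s$ appear, so its contribution is $-2a\,\Phi(a,q,1)$ when $q$ is even and $0$ otherwise, which packages compactly as $-a(1+(-1)^q)\Phi(a,q,1)$. Expanding $y^{-s/n} = \sum_{m\geq 0}\frac{(-\ln y)^m}{n^m\, m!}\,s^m$ and collecting terms in $-F_2$ under the constraint $2k+m=q$ with $0\leq k\leq\lfloor q/2\rfloor$, together with $(-\ln y)^{q-2k}=(-1)^q\ln^{q-2k}(y)$, yields $\tfrac{2b(-1)^q}{n^q}\sum_{k=0}^{\lfloor q/2\rfloor}\tfrac{\Phi(b,2k,1)\ln^{q-2k}(y)}{(q-2k)!}$. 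Multiplying by $(q-1)!$ reproduces the claimed formula. The main obstacle is not any single computation but the bookkeeping around the pole cancellation: one has to manipulate two formally divergent Laurent series whose principal parts must agree, and the analytic-continuation convention for $\Phi(\pm 1,0,1)$ encoded in Remark \ref{first remark} is precisely what makes this rigorous.
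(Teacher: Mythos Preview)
Your proof is correct and follows essentially the same route as the paper: introduce a Mellin parameter $s$, express each of the two pieces via Lemma~\ref{x^{-s)}$/the substitution $yx^{n}\to t$, expand $y^{-s/n}$ as a power series, and then extract the coefficient of $s^{q-1}$. The only noteworthy difference is that you explicitly verify the cancellation of the simple poles of $F_{1}$ and $F_{2}$ at $s=0$ via $\Phi(\pm1,0,1)=\mp\tfrac12$, whereas the paper absorbs this into a single combined coefficient $a_{k}$ and relies (tacitly) on $a_{0}=0$ when asserting that $\partial_{s}^{\,q-1}(a_{k}s^{k-1})=0$ for $k<q$; your treatment of this point is in fact cleaner.
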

\begin{proof}
	First, note that we cannot split the integrand, as both resulting integrals would diverge. To evaluate this integral, we first apply differentiation under the integral sign, which is justified by the dominated convergence theorem, that is
	\[\int_0^\infty \frac{\ln^{q-1}(x)}{x}\left(\frac{1}{1-ax}-\frac{1}{1-byx^{n}}\right)\mathrm{d} x
	=\lim_{s\to0}\frac{\partial^{q-1}}{\partial s^{q-1}}\left(\int_0^\infty\frac{x^{s-1}}{1-ax}\mathrm{d} x-\int_0^\infty\frac{x^{s-1}}{1-byx^n}\mathrm{d} x\right).\]
	Then by exploiting the identity (\ref{a_(2k) to a_k}) in Lemma \ref{x^(-s)}, we find for the first integral
	\begin{equation}
		\int_0^\infty\frac{x^{s-1}}{1-ax}\mathrm{d} x=-a\sum_{k=0}^\infty(1+{(-1)}^{k})\,\Phi(a,k,1)s^{k-1}.\label{sum1}
	\end{equation}
	For the second integral, substituting $yx^n=t$ and then making use of the series form given in Lemma \ref{x^(-s)}, we get
	\[\int_0^\infty\frac{x^{s-1}}{1-byx^n}\mathrm{d} x=-2by^{-\frac{s}{n}}\sum_{k=0}^\infty n^{-2k}\,\Phi(b,2k,1)s^{2k-1}.\]
	Expanding $y^{-\frac{s}{n}}$ in a Taylor series and applying the special case of the Cauchy product for two convergent series \cite[p.115]{ali}:
	\[\left(\sum_{k=0}^\infty a_{2k}x^{2k}\right)\left(\sum_{k=0}^\infty b_{k}x^{k}\right)=\sum_{k=0}^\infty\left(\sum_{j=0}^{\lfloor\frac{k}{2}\rfloor}a_{2j}b_{k-2j}\right)x^{k},\]
	we find
	\begin{equation}
		\int_0^\infty\frac{x^{s-1}}{1-byx^n}\mathrm{d} x=-2b\sum_{k=0}^\infty \left(\sum_{j=0}^{\lfloor\frac{k}{2}\rfloor}\frac{{(-n)}^{-k}\ln^{k-2j}(y)}{(k-2j)!}\,\Phi(b,2j,1)\right)s^{k-1}.\label{sum2}
	\end{equation}
	From (\ref{sum1}) and (\ref{sum2}), we have
	\[\lim_{s\to0}\frac{\partial^{q-1}}{\partial s^{q-1}}\left(	\int_0^\infty\frac{x^{s-1}}{1-ax}\mathrm{d} x-\int_0^\infty\frac{x^{s-1}}{1-byx^n}\mathrm{d} x\right)\]
	\[=\lim_{s\to0}\frac{\partial^{q-1}}{\partial s^{q-1}}\sum_{k=0}^\infty\underbrace{\left(-a(1+{(-1)}^{k})\,\Phi(a,k,1)+\frac{2b{(-1)}^{k}}{n^{k}}\sum_{j=0}^{\lfloor\frac{k}{2}\rfloor}\frac{\ln^{k-2j}(y)}{(k-2j)!}\,\Phi(b,2j,1)\right)}_{a_{k}}s^{k-1}.\]
	Differentiating term by term, and then computing the $(q-1)$th derivative of $s^{k-1}$ with respect to $s$, that is
	\[\frac{\partial^{q-1}}{\partial s^{q-1}} \left( a_k s^{k-1} \right) = 
	\begin{cases} 
		a_k \frac{(k-1)!}{(k-q)!} s^{k-q}, & k \geqslant q, \\ 
		0, & k < q,
	\end{cases}\]
	we conclude that
	\[\lim_{s\to0}\frac{\partial^{q-1}}{\partial s^{q-1}}\left(	\int_0^\infty\frac{x^{s-1}}{1-ax}\mathrm{d} x-\int_0^\infty\frac{x^{s-1}}{1-byx^n}\mathrm{d} x\right)=\lim_{s\to0}\sum_{k \geqslant q} a_k \frac{(k-1)!}{(k-q)!} s^{k-q}\]
	isolate the $k=q$ term to avoid the indeterminate value $0^0$
	\[=\lim_{s\to0}\left((q-1)!a_q+\sum_{k \geqslant q+1} a_k \frac{(k-1)!}{(k-q)!} s^{k-q}\right)=(q-1)!a_{q}\]
	\[=(q-1)!\left(-a(1+{(-1)}^{q})\,\Phi(a,q,1)
	+\frac{2b{(-1)}^{q}}{n^{q}}\sum_{j=0}^{\lfloor\frac{q}{2}\rfloor}\frac{\ln^{q-2j}(y)}{(q-2j)!}\,\Phi(b,2j,1)\right).\]
\end{proof}
\begin{lemma}\label{generating function}
	Let $\mathcal{H}_k^{(p)}(b)=\sum_{m=1}^{k}\frac{b^{m-1}}{m^{p}}$, $p\in\mathbb{Z}^{+}$, $a,b=\pm1$, and $|x|<1$. Then
	\[\sum_{k=1}^{\infty} \mathcal{H}_k^{(p)}(b) x^{k}=b\frac{\operatorname{Li}_p(bx)}{1-x}.\]
\end{lemma}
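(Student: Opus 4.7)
The identity is a standard partial-sum generating function, so the plan is to expand the left-hand side as a double series and interchange the order of summation. Writing
\[
\sum_{k=1}^\infty \mathcal{H}_k^{(p)}(b)\, x^k = \sum_{k=1}^\infty \left(\sum_{m=1}^k \frac{b^{m-1}}{m^p}\right) x^k,
\]
I would justify swapping the two sums by absolute convergence for $|x|<1$ (the double series $\sum_{k\ge m\ge 1} |x|^k/m^p$ is dominated by $\sum_m m^{-p}\cdot |x|^m/(1-|x|)$, which converges), and then collect terms by $m$:
\[
\sum_{m=1}^\infty \frac{b^{m-1}}{m^p} \sum_{k=m}^\infty x^k = \sum_{m=1}^\infty \frac{b^{m-1}}{m^p}\cdot\frac{x^m}{1-x}=\frac{1}{1-x}\sum_{m=1}^\infty \frac{b^{m-1} x^m}{m^p}.
\]

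To finish, I would pull a factor of $b$ out of $b^{m-1}$ using $b^{-1}=b$ (since $b=\pm1$), obtaining
\[
\sum_{m=1}^\infty \frac{b^{m-1} x^m}{m^p} = b\sum_{m=1}^\infty \frac{(bx)^m}{m^p} = b\operatorname{Li}_p(bx),
\]
by the series definition of the polylogarithm. Putting the pieces together yields the claimed formula $b\operatorname{Li}_p(bx)/(1-x)$.

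An alternative, equivalent route is to recognize the right-hand side as the Cauchy product of $\operatorname{Li}_p(bx)=\sum_{k\ge1}(bx)^k/k^p$ with the geometric series $1/(1-x)=\sum_{k\ge0}x^k$: the coefficient of $x^k$ in the product is $\sum_{m=1}^k b^m/m^p = b\,\mathcal{H}_k^{(p)}(b)$, and dividing by $b$ gives the result. There is no real obstacle here; the only points worth being careful about are the interchange of summations (covered by absolute convergence on $|x|<1$) and the bookkeeping with the $b^{m-1}$ vs.\ $b^m$ factor, which accounts for the single factor of $b$ on the right-hand side.
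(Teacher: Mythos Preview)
Your proof is correct and follows essentially the same route as the paper: interchange the double sum, evaluate the inner geometric tail $\sum_{k\ge m}x^k=x^m/(1-x)$, and then identify $b\operatorname{Li}_p(bx)$ using $b^{m-1}=b\cdot b^m$. The paper cites a summation-swap identity rather than writing out the absolute-convergence bound you give, but the argument is otherwise identical.
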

\begin{proof}
	Using the summation identity \cite[p. 8]{ali}:
	\[\sum_{k=1}^\infty\sum_{m=1}^k a_k b_m=\sum_{m=1}^\infty\sum_{k=m}^\infty a_k b_m,\]
	we have
	\[\sum_{k=1}^{\infty} \mathcal{H}_k^{(p)}(b) x^{k}=\sum_{k=1}^{\infty} \sum_{m=1}^{k} \frac{b^{m-1}}{m^p} x^{k}=\sum_{m=1}^{\infty}\frac{b^{m-1}}{m^p} \left(\sum_{k=m}^{\infty} x^{k}\right)\]
	\[=\sum_{m=1}^{\infty}\frac{b^{m-1}}{m^p} \left( \frac{x^{m}}{1-x}\right)=\frac{b}{1-x}\sum_{m=1}^\infty\frac{{(bx)}^{m}}{m^p}=b\frac{\operatorname{Li}_p(bx)}{1-x}.\]
\end{proof}


\section{Main results}
\begin{theorem}\label{infinity integral1}
	Let $p,q,n\in\mathbb{Z}^{+}$, and $a,b=\pm1$, with $p\neq1$ when $a^nb=1$, and $q\neq1$ when $a=1$. Then the following identity holds:
	\[\int_0^\infty\frac{\ln^{q-1}(x)\operatorname{Li}_{p}(bx^{n})}{x(1-ax)}\mathrm{d} x=-a^{n-1}b(1+{(-1)}^{q})(q-1)!\,\Phi(a^nb,p,1)\,\Phi(a,q,1)\]	
	\[+2a^n n^{-q} (q-1)!\sum_{j=0}^{\lfloor\frac{q}{2}\rfloor}\binom{p+q-2j-1}{p-1}\,\Phi(b,2j,1)\,\Phi(a^nb,p+q-2j,1)\]	
	\[+a^{n-1}b n^{-q}(q-1)!\sum_{j=2}^{n}\sum_{k=1}^{q}\binom{p+q-k-1}{p-1}a^{j}{(-1)}^{k}\,\Theta(b,k,j-1,n)\times\]
	\[\Phi\left(a^nb,p+q-k,\frac{n-j+1}{n}\right),\]	where $\Phi$ is the Lerch transcendent function, and $\Theta$ is defined in (\ref{theta}). 
\end{theorem}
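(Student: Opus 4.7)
The plan is to substitute the polylogarithm integral representation~(\ref{polylog}) for $\operatorname{Li}_p(bx^n)$, interchange the order of integration by Fubini (with the $b=1$ principal values understood), and decompose the resulting inner integral by two algebraic identities: the partial fraction $\frac{1}{1-ax}=\sum_{j=0}^{n-1}\frac{a^j x^j}{1-a^n x^n}$ and the cross-ratio
\[\frac{1}{(1-a^n x^n)(1-bux^n)}=\frac{1}{(a^n-bu)x^n}\left(\frac{1}{1-a^n x^n}-\frac{1}{1-bux^n}\right),\]
combined with $\frac{1}{a^n-bu}=\frac{a^n}{1-a^n bu}$ (since $a^{2n}=1$). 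This rewrites the integral as
\[I=\frac{a^n b(-1)^{p-1}}{(p-1)!}\sum_{j=0}^{n-1}a^j\int_0^1\frac{\ln^{p-1}(u)}{1-a^n bu}\int_0^\infty x^{j-1}\ln^{q-1}(x)\!\left(\frac{1}{1-a^n x^n}-\frac{1}{1-bux^n}\right)dx\,du,\]
in which the two halves of the inner bracket are individually convergent for $j\in\{1,\ldots,n-1\}$ (since $j/n\in(0,1)$), whereas for $j=0$ only their combination converges.

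For the $j=0$ term, the substitution $t=x^n$ brings the inner bracket into the form of Lemma~\ref{tough integral} with the lemma's parameters set to $(n,a,b,y)=(1,a^n,b,u)$; evaluating the outer $u$-integrals by Lemma~\ref{lerch integral} produces the theorem's second term together with a spurious piece $-\frac{b(1+(-1)^q)(q-1)!}{n^q}\Phi(a^n,q,1)\Phi(a^n b,p,1)$ that will be cancelled later. For $j\in\{1,\ldots,n-1\}$, the first half of the bracket is handled directly by Lemma~\ref{theta integral} as $\frac{(-1)^{q-1}(q-1)!}{n^q}\Theta(a^n,q,j,n)$. For the second half, whose denominator carries the non-unit coefficient $bu$, I would substitute $y=u^{1/n}x$ (using $|bu|=u$ and $\operatorname{sgn}(bu)=b$ for $u\in(0,1)$), which reduces the denominator to $1-by^n$ while introducing a factor $u^{-j/n}$ and the shift $\ln x=\ln y-\tfrac{1}{n}\ln u$; binomial expansion of $(\ln y-\tfrac{1}{n}\ln u)^{q-1}$ followed by termwise application of Lemma~\ref{theta integral} produces $\Theta(b,k+1,j,n)$ weighted by powers of $\ln u$. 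Integrating these against $\frac{\ln^{p-1}(u)}{1-a^n bu}$ via Lemma~\ref{lerch integral} with the shifted parameter $s=(n-j)/n$, and then relabelling $k\mapsto k+1$ and $j\mapsto j+1$ so that $k\in\{1,\ldots,q\}$ and $j\in\{2,\ldots,n\}$, yields precisely the theorem's third term.

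What remains is to reconcile the $\Theta(a^n,q,j,n)$ remainders from the $j\geq 1$ sum with the spurious $\Phi(a^n,q,1)$ from the $j=0$ contribution. Writing $\Theta(a^n,q,j,n)=\Phi(a^n,q,j/n)+a^n(-1)^q\Phi(a^n,q,(n-j)/n)$ and using the symmetry $j\mapsto n-j$, the sum $\sum_{j=1}^{n-1}a^j\Theta(a^n,q,j,n)$ collapses to $(1+(-1)^q)\sum_{j=1}^{n-1}a^j\Phi(a^n,q,j/n)$, which a direct series manipulation (separating the indices $\ell\not\equiv 0\pmod n$ from $\sum_{\ell\geq 1}a^\ell/\ell^q$) evaluates as $n^q a\Phi(a,q,1)-a^n\Phi(a^n,q,1)$. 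The $\Phi(a^n,q,1)$ contribution then cancels the spurious piece, and after simplifying with $a^{n+1}=a^{n-1}$ and $(-1)^{q-1}(1+(-1)^q)=-(1+(-1)^q)$, the survivor is exactly the first term of the theorem. I expect the main obstacle to be the $j\geq 1$ evaluation: correctly treating $\int_0^\infty\frac{x^{j-1}\ln^{q-1}(x)}{1-bux^n}dx$ when $bu\notin\{\pm1\}$, and carefully bookkeeping the $u^{-j/n}$ and mixed logarithmic factors through the outer application of Lemma~\ref{lerch integral}, so as to recover $\Phi(a^n b,p+q-k,(n-j+1)/n)$ in the precise form stated.
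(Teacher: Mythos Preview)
Your argument is correct and reaches the stated formula, but the decomposition you use is genuinely different from the paper's. After writing $\operatorname{Li}_p(bx^n)$ via its integral representation and swapping the order of integration, the paper applies the single partial fraction
\[
\frac{x^{n}}{(1-ax)(1-byx^{n})}=\frac{a^n}{1-a^n by}\left(\frac{1}{1-ax}-\frac{\sum_{j=1}^n (ax)^{j-1}}{1-byx^{n}}\right),
\]
isolates the $j=1$ term of the inner sum, and feeds the combination $\frac{1}{1-ax}-\frac{1}{1-byx^n}$ directly into Lemma~\ref{tough integral} with its full parameter set $(a,b,n,y)$; this immediately produces $\Phi(a,q,1)$ in the first term of the theorem, and the remaining $j\geq 2$ summands (after the substitution $t=y^{1/n}x$ and binomial expansion, exactly as in your second half) yield the double sum. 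By contrast, you first rewrite $\frac{1}{1-ax}$ as $\frac{\sum_{j=0}^{n-1}(ax)^j}{1-a^n x^n}$ and then use the cross-ratio, so your inner bracket is the more symmetric $\frac{1}{1-a^n x^n}-\frac{1}{1-bux^n}$. The price is that your $j=0$ term, after the substitution $t=x^n$, calls Lemma~\ref{tough integral} with first parameter $a^n$ rather than $a$, producing $\Phi(a^n,q,1)$ instead of $\Phi(a,q,1)$; you then need the residue-class identity $\sum_{j=1}^{n-1}a^j\Phi(a^n,q,j/n)=n^q a\,\Phi(a,q,1)-a^n\Phi(a^n,q,1)$ (coming from the first halves of the $j\geq 1$ brackets) to cancel the spurious $\Phi(a^n,q,1)$ and recover the correct $\Phi(a,q,1)$. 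Both routes work; the paper's is slightly shorter because it avoids this cancellation and keeps the convergence hypothesis of Lemma~\ref{tough integral} aligned with the theorem's own condition ``$q\neq1$ when $a=1$'' (in your version the lemma's hypothesis becomes ``$q\neq1$ when $a^n=1$'', which is not literally assumed when $a=-1$ and $n$ is even, though the offending term carries the vanishing factor $1+(-1)^q$ in that case).
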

\begin{remark}
	The case $p=a^nb=1$ is valid only when $n=1$, which leads to $a=b$, as the first term containing the factor $\Phi(a^nb,p,1)$ cancels with the last term in the first summation. The integral must be understood as the Cauchy principal value when $b=1$.
\end{remark}
\begin{proof}
	We have in (\ref{polylog}) that $x\leqslant1$. However, the polylogarithm can be extended to $x>1$ if the Cauchy principal value of the integral is taken into consideration, that is
	\begin{equation}
		\operatorname{Li}_p(x) = \frac{{(-1)}^{p-1}}{(p-1)!} \,\operatorname{P.V.} \int_0^1 \frac{x \ln^{p-1}(y)}{1 - xy} \, \mathrm{d} y, \quad x > 1.\label{pv}
	\end{equation}
	Using this definition, we write
	\[\int_0^\infty\frac{\ln^{q-1}(x)\operatorname{Li}_p(bx^n)}{x(1-ax)}\mathrm{d} x=\frac{{(-1)}^{p-1}}{(p-1)!}\int_0^\infty\int_0^1\frac{bx^n\ln^{q-1}(x)\ln^{p-1}(y)}{x(1-ax)(1-byx^n)}\mathrm{d} y\mathrm{d} x,\]
	where the double integral must be understood as the Cauchy principal value for the case $m=1$, as stated in (\ref{pv}). 
	Next, we swap the order of integration, which is justified by Fubini's theorem. Therefore, the main integral becomes
	\begin{equation}
		\int_0^\infty\frac{\ln^{q-1}(x)\operatorname{Li}_p(bx^n)}{x(1-ax)}\mathrm{d} x=\frac{{(-1)}^{p-1}}{(p-1)!}\int_0^1\ln^{p-1}(y)\int_0^\infty\frac{bx^n\ln^{q-1}(x)}{x(1-ax)(1-byx^n)}\mathrm{d} x\mathrm{d} y.\label{main}
	\end{equation}
	By partial fraction decomposition we have for $a,b=\pm1$:
	\begin{equation}		\frac{x^{n}}{(1-ax)(1-byx^{n})}=\frac{a^n}{1-a^n by}\left(\frac{1}{1-ax}-\frac{\sum_{j=1}^n (ax)^{j-1}}{1-byx^{n}}\right),\label{pattern}
	\end{equation}
	from which it follows that
	\[\int_0^\infty\frac{x^{n}\ln^{q-1}(x)}{x(1-ax)(1-byx^{n})}\mathrm{d} x=\frac{a^n}{1-a^n by}\int_0^\infty\frac{\ln^{q-1}(x)}{x}\left(\frac{1}{1-ax}-\frac{\sum_{j=1}^n (ax)^{j-1}}{1-byx^{n}}\right)\mathrm{d} x.\]
	Breaking up the integrand does solve the problem, as both resulting integrals would diverge. Instead, we separate the first term of the inner sum and then change the order of integration and summation. This yields
	\[\int_0^\infty\frac{x^{n}\ln^{q-1}(x)}{x(1-ax)(1-byx^{n})}\mathrm{d} x=\frac{a^n}{1-a^n by}\int_0^\infty\frac{\ln^{q-1}(x)}{x}\left(\frac{1}{1-ax}-\frac{1}{1-byx^n}\right)\mathrm{d} x\]
	\[-\frac{a^n}{1-a^n by}\sum_{j=2}^n a^{j-1}\int_0^\infty\frac{x^{j-2}\ln^{q-1}(x)}{1-byx^n}\mathrm{d} x.\]
	With these adjustments, both integrals are now convergent. The first integral appears in Lemma \ref{tough integral}. For the second integral, we let $yx^n=t^n$, apply the binomial expansion for $\ln^{q-1}(y^{-\frac{1}{n}}t)$, and then make use of Lemma \ref{theta integral}, we find
	\begin{equation}
		\int_0^\infty\frac{x^{j-2}\ln^{q-1}(x)}{1-byx^{n}}\mathrm{d} x=\frac{{(-1)}^{q-1}(q-1)!}{n^{q}}\sum_{k=1}^{q}\frac{\Theta(b,k,j-1,n)}{(q-k)!}y^{\frac{-j+1}{n}}\ln^{q-k}(y).\label{first integral}
	\end{equation}
	Collecting these two integrals, we conclude
	\[\int_0^\infty\frac{ x^{n-1}\ln^{q-1}(x)}{(1-ax)(1-byx^{n})}\mathrm{d} x
	=-\frac{a^{n-1}(q-1)!}{1-a^n by}(1+{(-1)}^{q})\,\Phi(a,q,1)\]
	\[+\frac{2a^nb{(-1)}^{q} (q-1)!}{(1-a^n by)n^{q}}\sum_{j=0}^{\lfloor\frac{q}{2}\rfloor}\frac{\Phi(b,2j,1)}{(q-2j)!}\ln^{q-2j}(y)\]
	\[+\frac{a^{n-1}{(-1)}^{q}(q-1)!}{(1-a^n by)n^{q}}\sum_{j=2}^n \sum_{k=1}^{q}a^{j}\frac{\Theta(b,k,j-1,n)}{(q-k)!}y^{\frac{-j+1}{n}}\ln^{q-k}(y).\]
	Returning to (\ref{main}) and then using Lemma \ref{lerch integral} completes the proof.
\end{proof}
A related integral can be found by substituting $x\to 1/x$, that is
\[\int_0^\infty\frac{\ln^{q-1}(x)\operatorname{Li}_{p}(bx^{-n})}{1-ax}\mathrm{d} x={(-1)}^{q}a\int_0^\infty\frac{\ln^{q-1}(x)\operatorname{Li}_{p}(bx^{n})}{x(1-ax)}\mathrm{d} x.\]

\begin{theorem}\label{infinity integral2}
	Let $p,q,n\in\mathbb{Z}^{+}$, and $a,b=\pm1$, with $p\neq1$ when $a^nb=1$, and $q\neq1$ when $a=1$. Then the following identity holds:
	\[\int_0^\infty\frac{\ln^{q-1}(x)\operatorname{Li}_p(b{x}^{2n})}{1-ax^2}\mathrm{d} x=-a^{n-1}b 2^{-q}(q-1)!(1+a{(-1)}^{q})\,\Phi(a,q,\tfrac12)\,\Phi(a^nb,p,1)\]
	\[+a^{n-1}b {(2n)}^{-q}(q-1)!\sum_{j=1}^{n}\sum_{k=1}^{q}\binom{p+q-k-1}{p-1}a^j{(-1)}^{k}\,\Theta(b,k,2j-1,2n)\times\]
	\[\Phi\left(a^nb,p+q-k,\frac{2n-2j+1}{2n}\right),\]
	where $\Phi$ is the Lerch transcendent function, and $\Theta$ is defined in (\ref{theta}). The integral must be understood as the Cauchy principal value when $b=1$.
\end{theorem}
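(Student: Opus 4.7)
The plan is to follow the strategy of Theorem \ref{infinity integral1}, with one crucial simplification: since the integrand here carries no $1/x$ factor, every piece produced by partial fractions is individually convergent (as a Cauchy principal value when $b=1$), so Lemma \ref{tough integral} will not be required and the resulting closed form has one fewer family of terms than Theorem \ref{infinity integral1}.

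I begin by replacing $\operatorname{Li}_p(bx^{2n})$ with its integral representation (\ref{polylog}) (read as a principal value when $b=1$) and swapping the order of integration via Fubini:
\[\int_0^\infty\frac{\ln^{q-1}(x)\operatorname{Li}_p(bx^{2n})}{1-ax^2}\mathrm{d} x = \frac{(-1)^{p-1}b}{(p-1)!}\int_0^1\ln^{p-1}(y)\int_0^\infty\frac{x^{2n}\ln^{q-1}(x)}{(1-ax^2)(1-byx^{2n})}\mathrm{d} x\,\mathrm{d} y.\]
Then, applying the partial-fraction identity (\ref{pattern}) with $x$ replaced by $x^2$ gives
\[\frac{x^{2n}}{(1-ax^2)(1-byx^{2n})} = \frac{a^n}{1-a^n by}\left(\frac{1}{1-ax^2} - \sum_{j=1}^{n}\frac{(ax^2)^{j-1}}{1-byx^{2n}}\right),\]
which lets me split the inner $x$-integral cleanly into a leading piece $\int_0^\infty\frac{\ln^{q-1}(x)}{1-ax^2}\mathrm{d} x$ and $n$ pieces of the form $\int_0^\infty\frac{x^{2j-2}\ln^{q-1}(x)}{1-byx^{2n}}\mathrm{d} x$.

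The leading piece follows immediately from Lemma \ref{theta integral} with $r=2$, $s=1$, yielding $\tfrac{(-1)^{q-1}(q-1)!}{2^q}(1+a(-1)^q)\Phi(a,q,\tfrac12)$. For each $j$-piece the substitution $x=y^{-1/(2n)}t$ pulls the $y$-dependence outside; a binomial expansion of $\ln^{q-1}(y^{-1/(2n)}t)$ followed by Lemma \ref{theta integral} (now with $r=2n$, $s=2j-1$) produces
\[\int_0^\infty\frac{x^{2j-2}\ln^{q-1}(x)}{1-byx^{2n}}\mathrm{d} x = \frac{(-1)^{q-1}(q-1)!\,y^{-(2j-1)/(2n)}}{(2n)^q}\sum_{k=1}^{q}\frac{\Theta(b,k,2j-1,2n)}{(q-k)!}\ln^{q-k}(y).\]
Substituting these back into the outer $y$-integral, every remaining integral has the shape $\int_0^1\frac{y^{s-1}\ln^{p+q-k-1}(y)}{1-a^nby}\mathrm{d} y$ (with $s=1$ for the leading term and $s=(2n-2j+1)/(2n)$ for the $j$-th term), all of which Lemma \ref{lerch integral} evaluates in terms of $\Phi(a^nb,\,\cdot,\,\cdot)$.

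The remaining work is sign and combinatorial bookkeeping. The product of sign factors $(-1)^{p-1}\cdot(-1)^{q-1}\cdot(-1)^{p+q-k-1}$ collapses up to the $(-1)^k$ appearing in the theorem; the factorial ratio $(p+q-k-1)!/[(p-1)!(q-k)!]$ becomes $\binom{p+q-k-1}{p-1}$; and the product $a^n\cdot a^{j-1}$ combines with the overall prefactor to yield the displayed $a^{n-1}a^j$. For the leading term, combining $a^n$ with the $(-1)^{q-1}$ from the leading integral and the $(-1)^{p-1}$-cancellations produces exactly $-a^{n-1}b\,2^{-q}(q-1)!(1+a(-1)^q)\Phi(a,q,\tfrac12)\Phi(a^nb,p,1)$. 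The main obstacle is precisely this careful tracking of signs, shifted summation indices, and the principal-value conventions when $a=1$ or $b=1$; no new analytic input beyond Lemmas \ref{lerch integral} and \ref{theta integral} is required.
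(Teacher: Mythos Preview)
Your proposal is correct and follows essentially the same approach as the paper: the paper's own proof is a terse paragraph noting that one repeats the Theorem~\ref{infinity integral1} argument with the $x\mapsto x^2$ partial-fraction identity and the $(j,n)\mapsto(2j,2n)$ variant of (\ref{first integral}), emphasizing exactly your key observation that no term need be isolated from the inner sum because the divergence issue (and hence Lemma~\ref{tough integral}) disappears. Your write-up is in fact more explicit than the paper's sketch about where the leading $\Phi(a,q,\tfrac12)$ term comes from and how the combinatorics collapse.
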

\begin{remark}
	The case $p=a^nb=1$ is valid only when $n=1$, which leads to $a=b$ , as the addition of the first term containing the factor $\Phi(a^nb,p,1)$ and the $k=q$ term in the summation simplifies to \[2^{p-q}b(1+a{(-1)}^q)(q-1)!\Phi(a,q,\tfrac12)\eta(p).\] 
\end{remark}
\begin{proof}
	The proof follows from applying the same approach as in the previous proof, and using the following two identities:		
	\[\frac{x^{2n}}{(1-ax^2)(1-byx^{2n})}=\frac{a^n}{1-a^n by}\left(\frac{1}{1-ax^2}-\frac{\sum_{j=1}^n (ax^2)^{j-1}}{1-byx^{2n}}\right);\]
	\[\int_0^\infty\frac{x^{2j-2}\ln^{q-1}(x)}{1-byx^{2n}}\mathrm{d} x=\frac{{(-1)}^{q-1}(q-1)!}{{(2n)}^{q}}\sum_{k=1}^{q}\frac{\Theta(b,k,2j-1,2n)}{(q-k)!}y^{\frac{-2j+1}{2n}}\ln^{q-k}(y).\]
	which respectively follow from replacing $x$ with $x^2$ in (\ref{pattern}), and replacing $(j,n)$ with $(2j,2n)$ in (\ref{first integral}). The only difference in this proof is that it's unnecessary to isolate the first term of the inner sum $\sum_{j=1}^n (ax^2)^{j-1}$, which makes the proof even easier and shorter, as divergence issues do not arise. We then apply Lemma \ref{theta integral}.
\end{proof}

To extend the range of $n$ in Theorem \ref{infinity integral2} to negative integers, we make a change of variable $x\to1/x$, which yields
\[\int_0^\infty\frac{\ln^{q-1}(x)\operatorname{Li}_p(b{x}^{-2n})}{1-ax^2}\mathrm{d} x={(-1)}^{q}a\int_0^\infty\frac{\ln^{q-1}(x)\operatorname{Li}_p(b{x}^{2n})}{1-ax^2}\mathrm{d} x.\]

\begin{theorem}\label{from zero to one integral1}
	Let $p,q,n\in\mathbb{Z}^{+}$, $p+q$ odd, and $a,b=\pm1$, with $p\neq1$ when $a^nb=1$, and $q\neq1$ when $a=1$. Then the following two identities hold:
	\begin{enumerate}
		\item[i)]
		\[\int_0^1\frac{\ln^{q-1}(x)\operatorname{Li}_{p}(bx^{n})}{1-ax}\mathrm{d} x=\frac12 a b{(-1)}^{q}(q-1)!n^{-q}\,\Phi(b,p+q,1)\]
		\[-\frac12a^{n}b(1+{(-1)}^{q})(q-1)!\,\Phi(a,q,1)\,\Phi(a^nb,p,1)\]
		\[+a^{n-1}n^{-q} (q-1)!\sum_{j=0}^{\lfloor\frac{q}{2}\rfloor}\binom{p+q-2j-1}{p-1}\,\Phi(b,2j,1)\,\Phi(a^nb,p+q-2j,1)\]
		\[+bn^{p}(q-1)!\sum_{k=0}^{\lfloor\frac{p}{2}\rfloor}\binom{p+q-2k-1}{q-1}n^{-2k}\,\Phi(b,2k,1)\,\Phi(a,p+q-2k,1)\]
		\[+\frac12a^{n}b n^{-q}(q-1)!\sum_{j=2}^{n}\sum_{k=1}^{q}\binom{p+q-k-1}{p-1}a^{j}{(-1)}^{k}\,\Theta(b,k,j-1,n)\times\]
		\[\Phi\left(a^nb,p+q-k,\frac{n-j+1}{n}\right);\]
		\item[ii)]
		\[\int_0^1\frac{\ln^{q-1}(x)\operatorname{Li}_{p}(bx^{-n})}{1-ax}\mathrm{d} x=\frac12 ab(q-1)!n^{-q}\,\Phi(b,p+q,1)\]
		\[-\frac12a^{n}b(1+{(-1)}^{q})(q-1)!\,\Phi(a^nb,p,1)\,\Phi(a,q,1)\]
		\[+a^{n-1}{(-n)}^{-q}(q-1)!\sum_{j=0}^{\lfloor\frac{q}{2}\rfloor}\binom{p+q-2j-1}{p-1}\,\Phi(b,2j,1)\,\Phi(a^nb,p+q-2j,1)\]
		\[+b{(-n)}^{p}(q-1)!\sum_{k=0}^{\lfloor\frac{p}{2}\rfloor}\binom{p+q-2k-1}{q-1}n^{-2k}\,\Phi(b,2k,1)\,\Phi(a,p+q-2k,1)\]
		\[+\frac12 a^{n}b {(-n)}^{-q}(q-1)!\sum_{j=2}^{n}\sum_{k=1}^{q}\binom{p+q-k-1}{p-1}a^{j}{(-1)}^{k}\,\Theta(b,k,j-1,n)\times\]
		\[\Phi\left(a^nb,p+q-k,\frac{n-j+1}{n}\right),\]
	\end{enumerate}		
	where $\Phi$ is the Lerch transcendent function, and $\Theta$ is defined in (\ref{theta}). The integral in part (ii) must be understood as the Cauchy principal value when $b=1$. 
\end{theorem}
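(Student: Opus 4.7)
The plan is to deduce both identities from Theorem \ref{infinity integral1} by splitting the half-line integral at $x=1$ and folding the tail $[1,\infty)$ back onto $[0,1]$ via the substitution $x\mapsto 1/u$. Denote by $I_\infty$ the integral provided by Theorem \ref{infinity integral1}, by $J_{01}$ the target integral in part (i), and write $I_\infty = I_{[0,1]}+I_{[1,\infty)}$. A direct substitution $x=1/u$ (using $a^2=1$) yields
\[I_{[1,\infty)} = a(-1)^q\int_0^1\frac{\ln^{q-1}(u)\operatorname{Li}_p(bu^{-n})}{1-au}\mathrm{d} u.\]
I would then apply Lemma \ref{Li(bx)+Li(b/x)} with $x\mapsto u^n$ to write
\[\operatorname{Li}_p(bu^{-n}) = -(-1)^p\operatorname{Li}_p(bu^n) + 2b(-1)^p\sum_{k=0}^{\lfloor p/2\rfloor}\frac{n^{p-2k}\ln^{p-2k}(u)}{(p-2k)!}\Phi(b,2k,1).\]
Since $p+q$ is odd, $(-1)^{p+q}=-1$, so multiplying by $a(-1)^q$ collapses the $\operatorname{Li}_p(bu^n)$ coefficient to $+a$. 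Thus $I_{[1,\infty)} = a\,J_{01}$ plus an elementary remainder whose evaluation, via Lemma \ref{lerch integral} together with the evenness of $p+q-2k-1$, reduces to a linear combination of $(q-1)!\binom{p+q-2k-1}{q-1}\Phi(b,2k,1)\Phi(a,p+q-2k,1)$.

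For the head piece I split $\frac{1}{x(1-ax)}=\frac{1}{x}+\frac{a}{1-ax}$, which isolates another copy of $a\cdot J_{01}$ together with the simple integral $\int_0^1\frac{\ln^{q-1}(x)\operatorname{Li}_p(bx^n)}{x}\mathrm{d} x$; the latter is evaluated by expanding the polylogarithm as a power series and applying (\ref{classical}), producing $\tfrac{(-1)^{q-1}(q-1)!b}{n^q}\Phi(b,p+q,1)$. Combining the two pieces, $J_{01}$ appears in $I_\infty$ with coefficient $2a$; solving (using $a^2=1$) and then substituting the explicit value of $I_\infty$ from Theorem \ref{infinity integral1} gives part (i). The five groups of terms in the stated identity correspond one-to-one to: the elementary $\tfrac{1}{x}$-integral (first line), the three groups of $I_\infty$ (second, third, and fifth lines), and the remainder from Lemma \ref{Li(bx)+Li(b/x)} after division by $2a$ (fourth line).

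For part (ii) no further contour argument is needed: applying Lemma \ref{Li(bx)+Li(b/x)} directly to the integrand $\operatorname{Li}_p(bx^{-n})$ and integrating term by term against $\frac{\ln^{q-1}(x)}{1-ax}$ with Lemma \ref{lerch integral} expresses the part (ii) integral as $(-1)^q$ times the part (i) integral plus an explicit polynomial correction. Combining this correction with the fourth line of part (i), and redistributing the $(-1)^q$ factors, converts $n^{-q}$ into $(-n)^{-q}$ and $n^p$ into $(-n)^p$ in precisely the terms required. The main obstacle I expect is the sign bookkeeping forced by the parity constraint $p+q\equiv 1\pmod 2$, together with ensuring that potentially divergent pieces at $x=1$ cancel cleanly; the hypotheses $q\ne 1$ when $a=1$ and $p\ne 1$ when $a^nb=1$, both inherited from Theorem \ref{infinity integral1}, are exactly what is needed, and the Cauchy principal-value conventions for $b=1$ propagate from Lemma \ref{Li(bx)+Li(b/x)} and Theorem \ref{infinity integral1} without extra work.
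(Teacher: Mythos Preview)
Your proposal is correct and follows essentially the same route as the paper: split the Theorem~\ref{infinity integral1} integral at $x=1$, fold the tail via $x\mapsto 1/x$, apply the partial fraction $\tfrac{1}{x(1-ax)}=\tfrac{1}{x}+\tfrac{a}{1-ax}$ on the head, evaluate the $\tfrac{1}{x}$-piece by power-series expansion, and use Lemma~\ref{Li(bx)+Li(b/x)} together with the parity constraint $(-1)^{p+q}=-1$. The only cosmetic difference is that the paper keeps $\operatorname{Li}_p(bx^{-n})$ intact to form the pair of equations (\ref{yum1}) and (\ref{yum2}) (one from $I_\infty$, one from Lemma~\ref{Li(bx)+Li(b/x)}) and then solves this $2\times 2$ linear system for parts (i) and (ii) simultaneously, whereas you substitute Lemma~\ref{Li(bx)+Li(b/x)} into the tail immediately to isolate $2a\,J_{01}$ and then recover part~(ii) afterwards; the two organizations are algebraically equivalent.
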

\begin{remark}
	The case $p=a^nb=1$ is valid only when $n=1$, as the second term containing the factor $\Phi(a^nb,p,1)$ cancels with the last term in the first summation.
\end{remark}
\begin{proof}
	We begin with splitting the integral at $x=1$,
	\[\int_0^\infty\frac{\ln^{q-1}(x)\operatorname{Li}_{p}(bx^{n})}{x(1-ax)}\mathrm{d} x=\int_0^1\frac{\ln^{q-1}(x)\operatorname{Li}_{p}(bx^{n})}{x(1-ax)}\mathrm{d} x+\int_1^\infty\frac{\ln^{q-1}(x)\operatorname{Li}_{p}(bx^{n})}{x(1-ax)}\mathrm{d} x.\]
	For the first integral, we use $\frac{1}{x(1-ax)}=\frac{1}{x}+\frac{a}{1-ax}$. For the second integral, we substitute $x\to 1/x$. Therefore, the integral becomes
	\[\int_0^\infty\frac{\ln^{q-1}(x)\operatorname{Li}_{p}(bx^{n})}{1-ax}\mathrm{d} x\]
	\[=\int_0^1\frac{\ln^{q-1}(x)\operatorname{Li}_{p}(bx^{n})}{x}\mathrm{d} x+a\int_0^1\frac{\ln^{q-1}(x)}{1-ax}\left(\operatorname{Li}_{p}(bx^{n})+{(-1)}^{q} \operatorname{Li}_{p}\left(\frac{b}{x^n}\right)\right)\mathrm{d} x.\]
	The integral on the left-hand side appears in Theorem \ref{infinity integral1}. For the first integral on the right-hand side, expanding $\operatorname{Li}_{p}(bx^{n})$ in its Taylor series, then reverting the order of integration and summation, justified by the uniform convergence theorem, we find
	\[\int_0^1\frac{\ln^{q-1}(x)\operatorname{Li}_p(bx^n)}{x}\mathrm{d} x=\sum_{k=1}^\infty\frac{b^k}{k^p}\int_0^1 x^{nk-1}\ln^{q-1}(x)\mathrm{d} x\]
	\begin{equation}
		={(-1)}^{q-1}(q-1)!\sum_{k=1}^\infty\frac{b^k}{k^p(nk)^{q}}=b{(-1)}^{q-1}(q-1)!\,n^{-q}\,\Phi(b,p+q,1).\label{ff}
	\end{equation}
	Substituting this result into our expression and noting that ${(-1)}^{q}={(-1)}^{p-1}$ due to the assumption that $p+q$ is odd, we obtain
	\[\int_0^1\frac{\ln^{q-1}(x)}{1-ax}\left(\operatorname{Li}_{p}(bx^{n})-{(-1)}^{p} \operatorname{Li}_{p}\left(\frac{b}{x^n}\right)\right)\mathrm{d} x=ab{(-1)}^{q}(q-1)!\,n^{-q}\,\Phi(b,p+q,1)\]
	\[-a^{n}b(1+{(-1)}^{q})(q-1)!\,\Phi(a^nb,p,1)\,\Phi(a,q,1)\]
	\[+2a^{n-1} n^{-q} (q-1)!\sum_{j=0}^{\lfloor\frac{q}{2}\rfloor}\binom{p+q-2j-1}{p-1}\,\Phi(b,2j,1)\,\Phi(a^nb,p+q-2j,1)\]		
	\[+a^{n}b n^{-q}(q-1)!\sum_{j=2}^{n}\sum_{k=1}^{q}\binom{p+q-k-1}{p-1}a^{j}{(-1)}^{k}\,\Theta(b,k,j-1,n)\times\]
	\begin{equation}
		\Phi\left(a^nb,p+q-k,\frac{n-j+1}{n}\right).\label{yum1}
	\end{equation}
	To establish another relationship, we substitute $(c,x)$ with $(b,x^{n})$ in Lemma \ref{Li(bx)+Li(b/x)}, multiply through by $\frac{\ln^{q-1}(x)}{1-ax}$, and integrate for $x\in(0,1)$. Reordering the summation and integration, which is justified by the uniform convergence theorem, and then applying the integral form given in (\ref{lerch integral}), we find
	\[\int_0^1\frac{\ln^{q-1}(x)}{1-ax}\left(\operatorname{Li}_{p}(bx^{n})+{(-1)}^{p} \operatorname{Li}_{p}\left(\frac{b}{x^n}\right)\right)\mathrm{d} x\]
	\begin{equation}
		=2b n^{p}(q-1)!\sum_{k=0}^{\lfloor\frac{p}{2}\rfloor}\binom{p+q-2k-1}{q-1}n^{-2k}\,\Phi(b,2k,1)\,\Phi(a,p+q-2k,1).\label{yum2}
	\end{equation}
	The proof follows from (\ref{yum1}) and (\ref{yum2}).
\end{proof}

\begin{corollary} Let $a=1$ and $b=\pm1$ in Theorem \ref{from zero to one integral1}. Then for $p+q$ odd
	\begin{enumerate}
		\item[i)]
		\[\int_0^1\frac{\ln^{q-1}(x)\operatorname{Li}_{p}(x^{n})}{1-x}\mathrm{d} x\]
		\[=\frac12 {(-1)}^{q}(q-1)!n^{-q}\zeta(p+q)-\frac12(1+{(-1)}^{q})(q-1)!\zeta(p)\zeta(q)\]
		\[+n^{p}(q-1)!\sum_{k=0}^{\lfloor\frac{p}{2}\rfloor}\binom{p+q-2k-1}{q-1}n^{-2k}\zeta(2k)\zeta(p+q-2k)\]
		\[+n^{-q}(q-1)!\sum_{j=0}^{\lfloor\frac{q}{2}\rfloor}\binom{p+q-2j-1}{p-1}\zeta(2j)\zeta(p+q-2j)\]
		\[+\frac12 n^{-q}(q-1)!\sum_{j=2}^{n}\sum_{k=1}^{q}\binom{p+q-k-1}{p-1}{(-1)}^{k}\,\Theta(1,k,j-1,n)\times\]
		\[\Phi\left(1,p+q-k,\frac{n-j+1}{n}\right);\]
		\item[ii)]
		\[\int_0^1\frac{\ln^{q-1}(x)\operatorname{Li}_{p}(-x^{n})}{1-x}\mathrm{d} x\]
		\[=-\frac12 {(-1)}^{q}(q-1)!n^{-q}\eta(p+q)+\frac12(1+{(-1)}^{q})(q-1)!\eta(p)\zeta(q)\]
		\[-n^{p}(q-1)!\sum_{k=0}^{\lfloor\frac{p}{2}\rfloor}\binom{p+q-2k-1}{q-1}n^{-2k}\eta(2k)\zeta(p+q-2k)\]
		\[+n^{-q}(q-1)!\sum_{j=0}^{\lfloor\frac{q}{2}\rfloor}\binom{p+q-2j-1}{p-1}\eta(2j)\eta(p+q-2j)\]
		\[-\frac12 n^{-q}(q-1)!\sum_{j=2}^{n}\sum_{k=1}^{q}\binom{p+q-k-1}{p-1}{(-1)}^{k}\Theta(-1,k,j-1,n)\times\]
		\[\Phi\left(-1,p+q-k,\frac{n-j+1}{n}\right).\]
	\end{enumerate}
\end{corollary}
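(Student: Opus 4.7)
The plan is to obtain both parts of the corollary as direct specializations of Theorem \ref{from zero to one integral1}(i), setting $a=1$ and letting $b$ run through $\pm 1$, and then collapsing every Lerch transcendent that appears to its classical form using the two reductions recorded in the preliminaries, namely $\Phi(1,s,1)=\zeta(s)$ and $\Phi(-1,s,1)=\eta(s)$. The hypothesis ``$p+q$ odd'' is inherited unchanged; the exceptional condition ``$p\neq 1$ when $a^n b=1$'' becomes ``$p\neq 1$ when $b=1$'' in part (i), and the exceptional condition ``$q\neq 1$ when $a=1$'' is in force throughout.

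For part (i) I would substitute $b=1$, so that $a^n b = 1$, $\Phi(b,\cdot,1)=\Phi(a^n b,\cdot,1)=\zeta(\cdot)$, and $\Phi(a,\cdot,1)=\zeta(\cdot)$. All powers $a^{n-1}$, $a^n$, $a^j$ are $+1$, so the sign structure of the theorem is preserved. The first two scalar terms in Theorem \ref{from zero to one integral1}(i) become $\tfrac{1}{2}(-1)^q(q-1)!n^{-q}\zeta(p+q)$ and $-\tfrac{1}{2}(1+(-1)^q)(q-1)!\zeta(p)\zeta(q)$; the two finite sums over $j$ and $k$ collapse to the displayed $\zeta(2j)\zeta(p+q-2j)$ and $\zeta(2k)\zeta(p+q-2k)$ expressions; the final triple sum retains $\Theta(1,k,j-1,n)$ and $\Phi(1,p+q-k,(n-j+1)/n)$, since these values cannot be simplified further in general. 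For part (ii) I would repeat the procedure with $b=-1$: now $\Phi(b,\cdot,1)=\eta(\cdot)$, $\Phi(a^n b,\cdot,1)=\eta(\cdot)$, $\Phi(a,\cdot,1)=\zeta(\cdot)$, and the explicit factor of $b=-1$ in front of the first, second, fourth, and fifth terms of the theorem produces precisely the sign flips shown in the stated formula, while in the fourth line the first Lerch reduces to $\eta$ and the second to $\zeta$, giving the $\eta(2k)\zeta(p+q-2k)$ terms.

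There is no genuine obstacle; the proof is pure bookkeeping. The only point that warrants attention is the asymmetric placement of minus signs between parts (i) and (ii): the third line of Theorem \ref{from zero to one integral1}(i) has prefactor $a^{n-1}n^{-q}(q-1)!$ with no $b$, so its sign is invariant under $b\mapsto -b$, whereas every other line on the right-hand side carries an explicit factor of $b$ and therefore flips sign when $b=-1$. Tracking this single asymmetry carefully through the five lines of the theorem yields the two displays in the corollary verbatim.
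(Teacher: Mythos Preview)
Your proposal is correct and matches the paper's approach exactly: the corollary is presented without a separate proof because it is obtained by the direct substitution $a=1$, $b=\pm1$ into Theorem~\ref{from zero to one integral1}(i) together with the reductions $\Phi(1,s,1)=\zeta(s)$ and $\Phi(-1,s,1)=\eta(s)$. Your observation about the single line without an explicit factor of $b$ (the $a^{n-1}n^{-q}$ term) being the only sign-invariant one under $b\mapsto -b$ is precisely the bookkeeping that distinguishes parts (i) and~(ii).
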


\begin{corollary}
	Let $a=-1$ and $b=\pm1$ in Theorem \ref{from zero to one integral1}. Then for $p+q$ odd
	\begin{enumerate}
		\item[i)]
		\[\int_0^1\frac{\ln^{q-1}(x)\operatorname{Li}_{p}(x^{n})}{1+x}\mathrm{d} x=-\frac12{(-1)}^{q}(q-1)!n^{-q}\zeta(p+q)\]
		\[-\frac12{(-1)}^{n}(1+{(-1)}^{q})(q-1)!\eta(q)\,\Phi({(-1)}^{n},p,1)\]
		\[+n^{p}(q-1)!\sum_{k=0}^{\lfloor\frac{p}{2}\rfloor}\binom{p+q-2k-1}{q-1}n^{-2k}\zeta(2k)\eta(p+q-2k)\]
		\[-{(-1)}^{n}n^{-q} (q-1)!\sum_{j=0}^{\lfloor\frac{q}{2}\rfloor}\binom{p+q-2j-1}{p-1}\zeta(2j)\,\Phi({(-1)}^{n},p+q-2j,1)\]
		\[+\frac12{(-1)}^{n}n^{-q}(q-1)!\sum_{j=2}^{n}\sum_{k=1}^{q}\binom{p+q-k-1}{p-1}{(-1)}^{j}{(-1)}^{k}\,\Theta(1,k,j-1,n)\times\]
		\[\Phi\left({(-1)}^{n},p+q-k,\frac{n-j+1}{n}\right);\]
		\item[ii)]
		\[\int_0^1\frac{\ln^{q-1}(x)\operatorname{Li}_{p}(-x^{n})}{1+x}\mathrm{d} x=\frac12 {(-1)}^{q}n^{-q}(q-1)!\eta(p+q)\]
		\[+\frac12{(-1)}^{n}(1+{(-1)}^{q})(q-1)!\eta(q)\,\Phi({(-1)}^{n-1},p,1)\]
		\[-n^{p}(q-1)!\sum_{k=0}^{\lfloor\frac{p}{2}\rfloor}\binom{p+q-2k-1}{q-1}n^{-2k}\eta(2k)\eta(p+q-2k)\]
		\[-{(-1)}^{n}n^{-q} (q-1)!\sum_{j=0}^{\lfloor\frac{q}{2}\rfloor}\binom{p+q-2j-1}{p-1}\eta(2j)\,\Phi({(-1)}^{n-1},p+q-2j,1)\]
		\[-\frac12{(-1)}^{n}n^{-q}(q-1)!\sum_{j=2}^{n}\sum_{k=1}^{q}\binom{p+q-k-1}{p-1}{(-1)}^{j}{(-1)}^{k}\,\Theta(-1,k,j-1,n)\times\]
		\[\Phi\left({(-1)}^{n-1},p+q-k,\frac{n-j+1}{n}\right).\]
	\end{enumerate}
\end{corollary}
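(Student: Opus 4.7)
The plan is to obtain both identities by direct substitution into Theorem \ref{from zero to one integral1}(i), setting $a = -1$ and then $b = 1$ for part (i) of the corollary and $b = -1$ for part (ii). There is no new analytic content required; the proof consists entirely of translating between the general Lerch-transcendent notation of the theorem and the named special functions used in the corollary, while carefully tracking powers of $-1$.

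First I would record the substitutions common to both parts. With $a = -1$, every factor of the form $\Phi(a, r, 1)$ in the theorem — namely $\Phi(a, q, 1)$ in the second term and $\Phi(a, p+q-2k, 1)$ in the fourth term — becomes $\eta(r)$. The powers of $a$ simplify as $a^{n-1} = -{(-1)}^{n}$, $a^{n} = {(-1)}^{n}$, and $a^{j} = {(-1)}^{j}$, while the compound base $a^{n}b$ becomes ${(-1)}^{n}b$. Then for part (i) of the corollary I set $b = 1$: every $\Phi(b, r, 1)$ collapses to $\zeta(r)$, the base $a^{n}b = {(-1)}^{n}$ remains as an irreducible Lerch transcendent since $n$ is a free parameter, and $\Theta(b, k, j-1, n)$ becomes $\Theta(1, k, j-1, n)$. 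For part (ii) I set $b = -1$: every $\Phi(b, r, 1)$ becomes $\eta(r)$, the base rewrites as $a^{n}b = {(-1)}^{n-1}$, $\Theta(b, \cdot, \cdot, \cdot) = \Theta(-1, \cdot, \cdot, \cdot)$, and each explicit factor of $b$ multiplying a term contributes an overall sign flip relative to part (i).

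The main (and essentially only) obstacle is sign bookkeeping across all five summands of the theorem. In particular, one must verify that the combination $a^{n-1}$ in the third term produces $-{(-1)}^{n}$ so that, after multiplication by the $b$ absorbed into the $\Phi(b, 2j, 1)$ factor, the resulting coefficient matches $-{(-1)}^{n}$ in part (i) and the corresponding sign in part (ii); and that the exponent of $-1$ inside each $\Phi(a^{n}b, \cdot, \cdot)$ ends up as $n$ for $b = 1$ and as $n-1$ for $b = -1$. Once these signs are handled termwise, both identities read off by inspection and the corollary is established.
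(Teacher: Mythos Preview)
Your proposal is correct and matches the paper's approach exactly: the corollary is stated in the paper without a separate proof, its entire justification being the heading ``Let $a=-1$ and $b=\pm1$ in Theorem~\ref{from zero to one integral1}'', so the result is obtained purely by direct substitution and the special-function identifications $\Phi(-1,r,1)=\eta(r)$, $\Phi(1,r,1)=\zeta(r)$ together with $a^{n}b={(-1)}^{n}$ (resp.\ ${(-1)}^{n-1}$) that you outline. Your termwise sign bookkeeping is accurate and nothing further is needed.
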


\begin{theorem}\label{from zero to one integral2}
	Let $p,q,n\in\mathbb{Z}^{+}$, $b=\pm1$, and $a={(-1)}^{p+q-1}$, with $p\neq1$ when $a^nb=1$, and $q\neq1$ when $a=1$. Then the following two identities hold:
	\begin{enumerate}
		\item[i)]
		\[\int_0^1\frac{\ln^{q-1}(x)\operatorname{Li}_p(b{x}^{2n})}{1-ax^2}\mathrm{d} x\]
		\[=-\frac12a^{n-1}b 2^{-q}(q-1)!(1-{(-1)}^{p})\,\Phi(a,q,\tfrac12)\,\Phi(a^nb,p,1)\]
		\[+ab 2^{-q} n^{p}(q-1)!\sum_{k=0}^{\lfloor\frac{p}{2}\rfloor}\binom{p+q-2k-1}{q-1}n^{-2k}\,\Phi(b,2k,1)\,\Phi(a,p+q-2k,\tfrac{1}{2})\]
		\[+\frac12 a^{n-1}b {(2n)}^{-q}(q-1)!\sum_{j=1}^{n}\sum_{k=1}^{q}\binom{p+q-k-1}{p-1}a^j{(-1)}^{k}\,\Theta(b,k,2j-1,2n)\times\]
		\[\Phi\left(a^nb,p+q-k,\frac{2n-2j+1}{2n}\right);\]
		\item[ii)]
		\[\int_0^1\frac{\ln^{q-1}(x)\operatorname{Li}_p(bx^{-2n})}{1-ax^2}\mathrm{d} x\]
		\[=-\frac12 a^n b {(-2)}^{-q}(q-1)!(1-{(-1)}^{p})\,\Phi(a,q,\tfrac12)\,\Phi(a^nb,p,1)\]
		\[-b{(-2)}^{-q}n^{p}(q-1)!\sum_{k=0}^{\lfloor\frac{p}{2}\rfloor}\binom{p+q-2k-1}{q-1}n^{-2k}\,\Phi(b,2k,1)\,\Phi(a,p+q-2k,\tfrac{1}{2})\]
		\[+\frac12a^{n}b{(-2n)}^{-q}(q-1)!\sum_{j=1}^{n}\sum_{k=1}^{q}\binom{p+q-k-1}{p-1}a^j{(-1)}^{k}\,\Theta(b,k,2j-1,2n)\times\]
		\[\Phi\left(a^nb,p+q-k,\frac{2n-2j+1}{2n}\right),\]
	\end{enumerate}						
	where $\Phi$ is the Lerch transcendent function, and $\Theta$ is defined in (\ref{theta}). The integral in part (ii) must be understood as the Cauchy principal value when $b=1$. 
\end{theorem}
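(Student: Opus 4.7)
The plan is to mirror the proof of Theorem \ref{from zero to one integral1}, adapted to the $x^2$ kernel, by setting up two linear relations in the two target integrals and solving a $2\times 2$ system. First I will split the integral of Theorem \ref{infinity integral2} at $x=1$ and substitute $x\mapsto 1/x$ in the tail. Because $a^2=1$, one has the algebraic simplification $1/(t^2-a)=-a/(1-at^2)$; tracking the Jacobian $-\mathrm{d} t/t^2$ together with $\ln^{q-1}(1/t)=(-1)^{q-1}\ln^{q-1}(t)$ produces an overall factor $a(-1)^q$ on the transformed tail. The hypothesis $a=(-1)^{p+q-1}$ forces $a(-1)^q=-(-1)^p$, so that, writing
\[I_{+}:=\int_0^1 \frac{\ln^{q-1}(x)\operatorname{Li}_p(bx^{2n})}{1-ax^2}\mathrm{d} x,\qquad I_{-}:=\int_0^1 \frac{\ln^{q-1}(x)\operatorname{Li}_p(bx^{-2n})}{1-ax^2}\mathrm{d} x,\]
the splitting yields
\[\textrm{(R1):}\quad I_{+}-(-1)^p I_{-}=\int_0^\infty\frac{\ln^{q-1}(x)\operatorname{Li}_p(bx^{2n})}{1-ax^2}\mathrm{d} x,\]
whose right-hand side is the closed form supplied by Theorem \ref{infinity integral2} (with the Cauchy principal-value convention when $b=1$ inherited from that result).

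For a second linear relation I will apply Lemma \ref{Li(bx)+Li(b/x)} with $(c,x)\mapsto(b,x^{2n})$ to obtain
\[\operatorname{Li}_p(bx^{2n})+(-1)^p\operatorname{Li}_p(bx^{-2n})=2b\sum_{k=0}^{\lfloor p/2\rfloor}\frac{(2n)^{p-2k}\ln^{p-2k}(x)}{(p-2k)!}\,\Phi(b,2k,1).\]
Multiplying by $\ln^{q-1}(x)/(1-ax^2)$ and integrating termwise on $(0,1)$ (the sum is finite, so no convergence theorem is required), Lemma \ref{lerch integral} with $s=1$, $r=2$ evaluates each inner integral as a multiple of $\Phi(a,p+q-2k,\tfrac12)$. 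Using $(-1)^{p+q-1}=a$ to collapse the resulting sign $(-1)^{p+q-2k-1}$ to $a$, and absorbing $(2n)^{p-2k}/2^{p+q-2k}=n^{p-2k}/2^q$, I arrive at
\[\textrm{(R2):}\quad I_{+}+(-1)^p I_{-}=2ab\cdot 2^{-q}n^p(q-1)!\sum_{k=0}^{\lfloor p/2\rfloor}\binom{p+q-2k-1}{q-1}n^{-2k}\,\Phi(b,2k,1)\,\Phi(a,p+q-2k,\tfrac12).\]

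Finally I will solve the system: half the sum of (R1) and (R2) gives $I_{+}$, proving part (i); half the difference (R2)$-$(R1), multiplied by $(-1)^p$ (using $1/(-1)^p=(-1)^p$), gives $I_{-}$, proving part (ii). The closing step is to convert the $(-1)^p$ prefactors in part (ii) into the compact forms $(-2)^{-q}$ and $(-2n)^{-q}$ via $(-2)^{-q}=(-1)^q 2^{-q}$ together with $a(-1)^q=-(-1)^p$, and to simplify the factor $1+a(-1)^q$ in the $\Phi(a,q,\tfrac12)$-term of Theorem \ref{infinity integral2} to $1-(-1)^p$. The main obstacle is not conceptual but combinatorial: matching binomials, powers of $2$, $n$, $a$, and $(-1)^p$ across Theorem \ref{infinity integral2}, the relation (R2), and the $x\mapsto 1/x$ substitution so that the sums consolidate into the two stated identities. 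Unlike Theorem \ref{from zero to one integral1}, no term needs to be isolated to avoid divergence, because the kernel $1/(1-ax^2)$ is locally integrable near $x=0$ and the inner sum $\sum_{j=1}^n$ is not split off; this is what makes the present proof shorter than its first-kernel analogue.
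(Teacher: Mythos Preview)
Your proposal is correct and follows essentially the same approach as the paper: split the $\int_0^\infty$ integral of Theorem~\ref{infinity integral2} at $x=1$, substitute $x\mapsto 1/x$ in the tail (using $a(-1)^q=(-1)^{p-1}$) to obtain one linear relation between $I_{+}$ and $I_{-}$, then apply Lemma~\ref{Li(bx)+Li(b/x)} together with Lemma~\ref{lerch integral} to obtain the second relation, and solve the resulting $2\times2$ system. The paper's equations (\ref{eq1}) and (\ref{eq2}) are exactly your (R1) and (R2), and your bookkeeping of the sign factors and the remark that no divergence-avoiding isolation is needed here both match the paper.
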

\begin{remark}
	The case $p=a^nb=1$ is valid only when $n=1$, which leads to $a=b$ , as the addition of the first term containing the factor $\Phi(a^nb,p,1)$ and the $k=q$ term in the second summation simplifies to \[2^{p-q-1}b(1-{(-1)}^p)(q-1)!\Phi(a,q,\tfrac12)\eta(p).\]
\end{remark}
\begin{proof} 
	We proceed by following the same steps as in the proof of Theorem \ref{from zero to one integral1}, splitting the integral at $x=1$ and making the substitution $x\to 1/x$ in the second integral,
	\[\int_0^\infty\frac{\ln^{q-1}(x)\operatorname{Li}_p(bx^{2n})}{1-ax^2}\mathrm{d} x=\int_0^1\frac{\ln^{q-1}(x)}{1-ax^2}\left(\operatorname{Li}_p(bx^{2n})+a{(-1)}^{q}\operatorname{Li}_p\left(\frac{b}{x^{2n}}\right)\right)\mathrm{d} x.\]
	Then we use the result from Theorem \ref{infinity integral2} and substitute $a{(-1)}^{q}={(-1)}^{p-1}$,
	\[\int_0^1\frac{\ln^{q-1}(x)}{1-ax^2}\left(\operatorname{Li}_p(bx^{2n})-{(-1)}^{p}\operatorname{Li}_p\left(\frac{b}{x^{2n}}\right)\right)\mathrm{d} x\]
	\[=-\frac{a^{n-1}b(q-1)!}{2^{q}}(1-{(-1)}^{p})\,\Phi(a,q,\tfrac12)\,\Phi(a^nb,p,1)\]
	\[+\frac{a^{n-1}b (q-1)!}{{(2n)}^{q}}\sum_{j=1}^{n}\sum_{k=1}^{q}\binom{p+q-k-1}{p-1}a^j{(-1)}^{k}\,\Theta(b,k,2j-1,2n)\times\]
	\begin{equation}
		\Phi\left(a^nb,p+q-k,\frac{2n-2j+1}{2n}\right).\label{eq1}
	\end{equation}	
	Another relationship can be established by employing Lemma \ref{Li(bx)+Li(b/x)},
	\[\int_0^1\frac{\ln^{q-1}(x)}{1-ax^2}\left(\operatorname{Li}_p(bx^{2n})+{(-1)}^{p}\operatorname{Li}_p\left(\frac{b}{x^{2n}}\right)\right)\mathrm{d} x\]
	\[=2b\sum_{k=0}^{\lfloor{\frac{p}{2}}\rfloor} \frac{{(2n)}^{p-2k}}{(p-2k)!}\,\Phi(c,2k,1)\int_0^1\frac{\ln^{p+q-2k-1}}{1-ax^2}\mathrm{d} x\]
	\begin{equation}
		=\frac{ab n^{p}(q-1)!}{2^{q-1}}\sum_{k=0}^{\lfloor\frac{p}{2}\rfloor}\binom{p+q-2k-1}{q-1}n^{-2k}\Phi(b,2k,1)\Phi(a,p+q-2k,\tfrac{1}{2}).\label{eq2}
	\end{equation}
	The proof follows from (\ref{eq1}) and (\ref{eq2}).
\end{proof}

\begin{corollary}
	Let $a=1$, and $b=\pm1$ in Theorem \ref{from zero to one integral2}. Then for $p+q$ odd
	\begin{enumerate}
		\item[i)]
		\[\int_0^1\frac{\ln^{q-1}(x)\operatorname{Li}_p(x^{2n})}{1-x^2}\mathrm{d} x=-\frac12(q-1)!(1-{(-1)}^{p})\lambda(q)\zeta(p)\]
		\[+{(2n)}^{p}(q-1)!\sum_{k=0}^{\lfloor\frac{p}{2}\rfloor}\binom{p+q-2k-1}{q-1}{(2n)}^{-2k}\zeta(2k)\lambda(p+q-2k)\]
		\[+\frac12{(2n)}^{-q}(q-1)!\sum_{j=1}^{n}\sum_{k=1}^{q}\binom{p+q-k-1}{p-1}{(-1)}^{k}\,\Theta(1,k,2j-1,2n)\times\]
		\[\Phi\left(1,p+q-k,\frac{2n-2j+1}{2n}\right);\]
		\item[ii)]
		\[\int_0^1\frac{\ln^{q-1}(x)\operatorname{Li}_p(-{x}^{2n})}{1-x^2}\mathrm{d} x=\frac12(q-1)!(1-{(-1)}^{p})\lambda(q)\eta(p)\]
		\[-{(2n)}^{p}(q-1)!\sum_{k=0}^{\lfloor\frac{p}{2}\rfloor}\binom{p+q-2k-1}{q-1}{(2n)}^{-2k}\eta(2k)\lambda(p+q-2k)\]
		\[-\frac12 {(2n)}^{-q}(q-1)!\sum_{j=1}^{n}\sum_{k=1}^{q}\binom{p+q-k-1}{p-1}{(-1)}^{k}\,\Theta(-1,k,2j-1,2n)\times\]
		\[\Phi\left(-1,p+q-k,\frac{2n-2j+1}{2n}\right).\]
	\end{enumerate}
\end{corollary}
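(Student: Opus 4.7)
The plan is to obtain both parts of the corollary by specializing Theorem \ref{from zero to one integral2}(i) to $a=1$, taking $b=1$ for part (i) and $b=-1$ for part (ii). The hypothesis that $p+q$ is odd is nothing but the requirement $a={(-1)}^{p+q-1}=1$ imposed by that theorem, and the exclusion $p\neq 1$ when $a^{n}b=1$ carries over verbatim to part (i), while it is automatically satisfied in part (ii) because there $a^{n}b=-1$.

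The simplifications rest on three elementary reductions of the Lerch transcendent, namely $\Phi(1,s,1)=\zeta(s)$, $\Phi(-1,s,1)=\eta(s)$, and $\Phi(1,s,\tfrac12)=2^{s}\lambda(s)$, all recorded in the preliminaries. Each factor of the form $\Phi(a,q,\tfrac12)$ appearing in the theorem yields $2^{q}\lambda(q)$, which absorbs the prefactor $2^{-q}$; similarly each factor $\Phi(a,p+q-2k,\tfrac12)$ contributes $2^{p+q-2k}\lambda(p+q-2k)$, and combined with the prefactor $2^{-q}n^{p}n^{-2k}$ this collapses to the $(2n)^{p}(2n)^{-2k}$ appearing in the statement.

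For part (i), with $a=b=1$ every factor $a^{j}$, $a^{n}$, $a^{n-1}b$ becomes $+1$, and substituting the three reductions above into the three lines of Theorem \ref{from zero to one integral2}(i) reproduces the claimed identity directly. For part (ii), with $a=1$ and $b=-1$, the bookkeeping is a little more delicate: the prefactor $a^{n-1}b=-1$ negates the first and third lines, the prefactor $ab=-1$ negates the middle line, while the index $a^{j}=1$ inside the double sum is unaffected; one then replaces $\Phi(b,2k,1)$ by $\eta(2k)$ and each $\Phi(a^{n}b,\cdot,\cdot)$ (with $a^{n}b=-1$) by $\Phi(-1,\cdot,\cdot)$, and collects terms.

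The only point that goes beyond pure substitution is the boundary case $p=1$ in part (i): there $a^{n}b=1$ forces $n=1$, so $\Phi(a^{n}b,p,1)=\Phi(1,1,1)$ in the first term is not directly defined and must be paired with the $k=q$ summand of the inner double sum, after which the singularities cancel and the combined contribution reduces to the finite expression stated in the remark following Theorem \ref{from zero to one integral2}. This cancellation is the main technical subtlety, but since it is already resolved at the level of the theorem, the specialization to $a=b=1$ is immediate and no new obstacle arises in passing to the corollary.
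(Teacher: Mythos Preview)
Your proposal is correct and is exactly the intended argument: the corollary is obtained by direct substitution of $a=1$, $b=\pm1$ into Theorem~\ref{from zero to one integral2}(i), followed by the three standard reductions $\Phi(1,s,1)=\zeta(s)$, $\Phi(-1,s,1)=\eta(s)$, and $\Phi(1,s,\tfrac12)=2^{s}\lambda(s)$, with the $2^{-q}$ prefactors absorbed as you describe. The paper provides no separate proof for this corollary, treating it as an immediate specialization, so your write-up in fact supplies more detail than the paper itself.
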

\begin{corollary}
	Let $a=-1$, and $b=\pm1$ in Theorem \ref{from zero to one integral2}. Then for $p+q$ even
	\begin{enumerate}
		\item[i)]
		\[\int_0^1\frac{\ln^{q-1}(x)\operatorname{Li}_p({x}^{2n})}{1+x^2}\mathrm{d} x\]
		\[=-\frac12{(-1)}^{n-1}(q-1)!(1-{(-1)}^{p})\beta(q)\,\Phi({(-1)}^{n},p,1)\]
		\[-{(2n)}^{p}(q-1)!\sum_{k=0}^{\lfloor\frac{p}{2}\rfloor}\binom{p+q-2k-1}{q-1}{(2n)}^{-2k}\zeta(2k)\beta(p+q-2k)\]
		\[-\frac12{(-1)}^{n}{(2n)}^{-q}(q-1)!\sum_{j=1}^{n}\sum_{k=1}^{q}\binom{p+q-k-1}{p-1}{(-1)}^{k+j}\,\Theta(1,k,2j-1,2n)\times\]
		\[\Phi\left({(-1)}^{n},p+q-k,\frac{2n-2j+1}{2n}\right);\] 
		\item[ii)]
		\[\int_0^1\frac{\ln^{q-1}(x)\operatorname{Li}_p(-{x}^{2n})}{1+x^2}\mathrm{d} x\]
		\[=-\frac12{(-1)}^{n}(q-1)!(1-{(-1)}^{p})\beta(q)\,\Phi({(-1)}^{n-1},p,1)\]
		\[+{(2n)}^{p}(q-1)!\sum_{k=0}^{\lfloor\frac{p}{2}\rfloor}\binom{p+q-2k-1}{q-1}{(2n)}^{-2k}\eta(2k)\beta(p+q-2k)\]
		\[+\frac12{(-1)}^{n}{(2n)}^{-q}(q-1)!\sum_{j=1}^{n}\sum_{k=1}^{q}\binom{p+q-k-1}{p-1}{(-1)}^{k+j}\,\Theta(-1,k,2j-1,2n)\times\]
		\[\Phi\left({(-1)}^{n-1},p+q-k,\frac{2n-2j+1}{2n}\right).\]
	\end{enumerate}
\end{corollary}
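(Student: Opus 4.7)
The plan is to obtain both identities as direct specializations of part (i) of Theorem \ref{from zero to one integral2}, substituting $a=-1$ (which, under the theorem's hypothesis $a={(-1)}^{p+q-1}$, forces $p+q$ to be even) together with $b=1$ for part (i) and $b=-1$ for part (ii). Once the substitution is made, every occurrence of $\Phi$ with first argument $\pm 1$ collapses to a named special function: the identities
\[\Phi(1,s,1)=\zeta(s),\quad \Phi(-1,s,1)=\eta(s),\quad \Phi(-1,s,\tfrac12)=2^{s}\beta(s),\quad \Phi(1,s,\tfrac12)=2^{s}\lambda(s),\]
recorded in the introduction, are exactly what is needed; here only $\zeta$, $\eta$, $\beta$ will appear (together with Lerch transcendents at roots of unity on the half line that cannot be named more concretely and are left as $\Phi({(-1)}^{n\pm1},\cdot,\cdot)$).

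The argument then reduces to a careful bookkeeping of signs and powers of $2$. Concretely, I will carry out the following three-step reduction on each of the three structural pieces of Theorem \ref{from zero to one integral2}(i):
\begin{enumerate}
\item[\textbf{(1)}] The prefactor term $-\tfrac12 a^{n-1}b\,2^{-q}(q-1)!(1-(-1)^p)\Phi(a,q,\tfrac12)\Phi(a^nb,p,1)$: write $\Phi(-1,q,\tfrac12)=2^{q}\beta(q)$ so the $2^{-q}$ cancels, and recognize $a^nb=(-1)^n$ (case $b=1$) or $(-1)^{n-1}$ (case $b=-1$), leaving an explicit Lerch evaluation $\Phi({(-1)}^{n\pm 1},p,1)$.
\item[\textbf{(2)}] The single $k$-sum: use $\Phi(a,p+q-2k,\tfrac12)=2^{p+q-2k}\beta(p+q-2k)$, absorbing the $2^{p-2k}$ into $(2n)^{p-2k}$ to produce the clean coefficient $(2n)^p\cdot(2n)^{-2k}$, while $\Phi(b,2k,1)$ becomes $\zeta(2k)$ or $\eta(2k)$ according as $b=1$ or $b=-1$.
\item[\textbf{(3)}] The double $(j,k)$-sum: the only change is the appearance of the factor $a^{j}=(-1)^{j}$, which merges with $(-1)^{k}$ into $(-1)^{k+j}$; the overall sign arising from $a^{n-1}b$ gets rewritten as $-(-1)^{n}$ or $+(-1)^{n}$ by absorbing a factor of $-1$.
\end{enumerate}

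The hypothesis that $p+q$ is even ensures that the reduction of $a=(-1)^{p+q-1}$ to $-1$ is consistent, and the degenerate sub-case $p=a^nb=1$ (with $n=1$) is handled by the simplification already flagged in the remark following Theorem \ref{from zero to one integral2}, so no separate treatment is required here. The only genuine work is the sign/power-of-$2$ arithmetic described above; after collecting the three pieces one reads off parts (i) and (ii) of the corollary. The expected main obstacle is purely notational: keeping the signs in the double sum straight through the substitutions $a^{n-1}b\mapsto (-1)^{n-1}b$ and $a^{n}b\mapsto \pm(-1)^{n-1}$, since a single miscounted factor of $-1$ propagates through the entire expression.
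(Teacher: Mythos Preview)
Your proposal is correct and matches the paper's approach exactly: the corollary is stated in the paper with no separate proof, since it is obtained by direct substitution of $a=-1$, $b=\pm1$ into Theorem~\ref{from zero to one integral2}(i) together with the reductions $\Phi(-1,s,\tfrac12)=2^{s}\beta(s)$, $\Phi(1,s,1)=\zeta(s)$, $\Phi(-1,s,1)=\eta(s)$. Your write-up actually supplies more of the sign and power-of-$2$ bookkeeping than the paper does, and all of it checks out.
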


\section{Euler sums results}
\begin{theorem} \label{thm of H_n}
	Let $p,q,n\in\mathbb{Z}^{+}$, $p+q$ odd, and $a,b=\pm1$, with $p\neq1$ when $b=1$, and $q\neq1$ when $a=1$. Then the following identity holds:
	\[\sum_{k=1}^\infty\frac{a^k\, \mathcal{H}_{nk}^{(p)}(b)}{k^q}=\frac12 ab^{n-1}n^{-p}\,\Phi(ab^n,p+q,1)+\frac12a(1-{(-1)}^{p})\,\Phi(a,q,1)\,\Phi(b,p,1)\]
	\[-ab^n{(-n)}^{q}\sum_{k=0}^{\lfloor\frac{q}{2}\rfloor}\binom{p+q-2k-1}{p-1}n^{-2k}\,\Phi(ab^n,2k,1)\,\Phi(b,p+q-2k,1)\]
	\[+b^{n-1}{(-n)}^{-p}\sum_{j=0}^{\lfloor\frac{p}{2}\rfloor}\binom{p+q-2j-1}{q-1}\,\Phi(ab^n,2j,1)\,\Phi(a,p+q-2j,1)\]
	\[+\frac{a}{2}{(-n)}^{-p}\sum_{j=2}^{n}\sum_{k=1}^{p}\binom{p+q-k-1}{q-1}b^{j}{(-1)}^{k}\,\Theta(ab^n,k,j-1,n)\times\]
	\[\Phi\left(a,p+q-k,\frac{n-j+1}{n}\right),\]
	where $\mathcal{H}_{nk}^{(p)}(b)=\sum_{m=1}^{nk}\frac{b^{m-1}}{m^p}$, $\Phi$ is the Lerch function, and $\Theta$ is defined in (\ref{theta}).
\end{theorem}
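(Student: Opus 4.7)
The plan is to connect the Euler sum to the integral closed-formed in Theorem \ref{from zero to one integral1}. I would introduce the auxiliary integral
\[J:=\int_{0}^{1}\frac{\ln^{p-1}(x)\operatorname{Li}_{q}(ab^{n}x^{n})}{1-bx}\,\mathrm{d} x,\]
which is exactly the integral of Theorem \ref{from zero to one integral1} under the relabeling $(p,q,a,b)\mapsto(q,p,b,ab^{n})$. This relabeling is natural because then $A^{n}B=b^{n}\cdot ab^{n}=a$, so the composite parameter $A^{n}B$ of that theorem becomes the base $a$ of the power series driving our Euler sum. The hypotheses transfer cleanly: $P+Q=q+p$ is odd by assumption, ``$P\neq 1$ when $A^{n}B=1$'' becomes ``$q\neq 1$ when $a=1$'', and ``$Q\neq 1$ when $A=1$'' becomes ``$p\neq 1$ when $b=1$''.

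The first step is to expand $J$ as a double series, using the Taylor expansion of $\operatorname{Li}_{q}$, the geometric series for $1/(1-bx)$, and the identity (\ref{classical}), after justifying a swap of sum and integral by uniform convergence. Re-indexing the double sum by $j=nm+k+1$ gives
\[J=(-1)^{p-1}(p-1)!\sum_{m=1}^{\infty}\frac{(ab^{n})^{m}}{m^{q}}\sum_{j=nm+1}^{\infty}\frac{b^{j-nm-1}}{j^{p}}.\]
Using $b^{2}=1$, the inner tail simplifies to $b^{nm}\bigl[\Phi(b,p,1)-\mathcal{H}_{nm}^{(p)}(b)\bigr]$ by the definitions of $\Phi$ and $\mathcal{H}_{k}^{(p)}(b)$; when multiplied by $(ab^{n})^{m}=a^{m}b^{nm}$, the factor $b^{2nm}=1$ collapses, producing the clean relation
\[J=(-1)^{p-1}(p-1)!\left[a\,\Phi(a,q,1)\,\Phi(b,p,1)-\sum_{k=1}^{\infty}\frac{a^{k}\mathcal{H}_{nk}^{(p)}(b)}{k^{q}}\right].\]

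The second step is to solve this for the Euler sum and substitute in the closed form of $J$ given by Theorem \ref{from zero to one integral1}, multiplying through by $(-1)^{p}/(p-1)!$. The factorials cancel, and $(-1)^{p}n^{-p}=(-n)^{-p}$ produces the powers of $-n$ appearing in the claimed formula; each of the five terms of that theorem lands on one of the five summands of the statement. The only non-routine merger is at the second summand: the free contribution $a\,\Phi(a,q,1)\,\Phi(b,p,1)$ combines with the term of Theorem \ref{from zero to one integral1} carrying the factor $(1+(-1)^{p})$ to produce the coefficient $\tfrac12 a(1-(-1)^{p})$, using $1-\tfrac12(1+(-1)^{p})=\tfrac12(1-(-1)^{p})$. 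The main obstacle is really just identifying the correct relabeling $B=ab^{n}$ (given $A=b$) that forces $A^{n}B=a$; once this is made, the re-indexing collapses cleanly and what remains is careful tracking of signs and the simplifications $b^{n+1}=b^{n-1}$ and $b^{2nm}=1$.
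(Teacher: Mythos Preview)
Your proposal is correct and follows essentially the same route as the paper: both reduce the Euler sum to the integral $\int_{0}^{1}\ln^{p-1}(x)\operatorname{Li}_{q}(ab^{n}x^{n})/(1-bx)\,\mathrm{d} x$ via the identity $\sum_{k}a^{k}\mathcal{H}_{nk}^{(p)}(b)/k^{q}=a\,\Phi(a,q,1)\,\Phi(b,p,1)-\tfrac{(-1)^{p-1}}{(p-1)!}J$, and then invoke Theorem~\ref{from zero to one integral1} under the relabeling $(p,q,a,b)\mapsto(q,p,b,ab^{n})$. The only cosmetic difference is that the paper derives this relation by writing $\mathcal{H}_{nk}^{(p)}(b)$ as the integral $\tfrac{(-1)^{p-1}}{(p-1)!}\int_{0}^{1}\ln^{p-1}(x)\,\tfrac{1-(bx)^{nk}}{1-bx}\,\mathrm{d} x$ and summing in $k$ inside the integral, whereas you expand $J$ directly as a double series and recognize the tail; these are the same computation read in opposite directions.
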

\begin{remark}
	The case $p=b=1$ is valid only when $n=1$ because the first term containing the factor $\Phi(b,p,1)$ cancels with the last term in the first summation.
\end{remark}
\begin{proof} Writing the integral form of $1/m^{p}$ as
	\[\frac{1}{m^p}=\frac{{(-1)}^{p-1}}{(p-1)!}\int_0^1x^{m-1}\ln^{p-1}(x)\mathrm{d} x\]
	and then reverting the order of integration and summation, we have
	\[\mathcal{H}_{nk}^{(p)}(b)=\sum_{m=1}^{nk}\frac{b^{m-1}}{m^p}=\frac{{(-1)}^{p-1}}{(p-1)!}\int_0^1 \ln^{p-1}(x)\sum_{m=1}^{nk} (bx)^{m-1}\mathrm{d} x\]
	\[=\frac{{(-1)}^{p-1}}{(p-1)!}\int_0^1 \ln^{p-1}(x)\frac{1-(bx)^{nk}}{1-bx}\mathrm{d} x.\]
	Using this integral form and then changing the order of integration and summation, which is justified by the uniform convergence theorem, we write
	\[\sum_{k=1}^\infty\frac{a^k\, \mathcal{H}_{nk}^{(p)}(b)}{k^q}=\frac{{(-1)}^{p-1}}{(p-1)}\int_0^1\frac{\ln^{p-1}(x)}{1-bx}\sum_{k=1}^\infty\frac{a^k-(ab^nx^n)^k}{n^q}\mathrm{d} x\]
	\[=\frac{{(-1)}^{p-1}}{(p-1)}\int_0^1\frac{\ln^{p-1}(x)}{1-bx}\left(a\,\Phi(a,q,1)-\operatorname{Li}_q\left(ab^nx^n\right)\right)\mathrm{d} x\]	
	\[=a\,\Phi(a,q,1)\,\Phi(b,p,1)-\frac{{(-1)}^{p-1}}{(p-1)!}\int_0^1\frac{\ln^{p-1}(x)\operatorname{Li}_q\left(ab^nx^n\right)}{1-bx}\mathrm{d} x.\]
	The proof follows from Theorem \ref{from zero to one integral1} with $(a,b)$ replaced with $(b,ab^n)$.
\end{proof}
\begin{corollary}
	Let $(a,b)=(\pm1,1)$ in Theorem \ref{Euler sums1}. Then for $p+q$ odd
	\begin{enumerate}
		\item[i)]
		\[\sum_{k=1}^\infty\frac{H_{nk}^{(p)}}{k^q}=\frac12(1-{(-1)}^{p})\zeta(q)\zeta(p)+\frac12 n^{-p}\zeta(p+q)\]
		\[+{(-n)}^{-p}\sum_{j=0}^{\lfloor\frac{p}{2}\rfloor}\binom{p+q-2j-1}{q-1}\zeta(2j)\zeta(p+q-2j)\]
		\[-{(-n)}^{q}\sum_{k=0}^{\lfloor\frac{q}{2}\rfloor}\binom{p+q-2k-1}{p-1}n^{-2k}\zeta(2k)\zeta(p+q-2k)\]
		\[+\frac12 {(-n)}^{-p}\sum_{j=2}^{n}\sum_{k=1}^{p}\binom{p+q-k-1}{q-1}{(-1)}^{k}\,\Theta(1,k,j-1,n)\times\]
		\[\Phi\left(1,p+q-k,\frac{n-j+1}{n}\right);\]
		\item[ii)]
		\[\sum_{k=1}^\infty\frac{{(-1)}^{k} H_{nk}^{(p)}}{k^q}=-\frac12(1-{(-1)}^{p})\eta(q)\zeta(p)-\frac12 n^{-p}\eta(p+q)\]
		\[+{(-n)}^{-p}\sum_{j=0}^{\lfloor\frac{p}{2}\rfloor}\binom{p+q-2j-1}{q-1}\eta(2j)\eta(p+q-2j)\]
		\[+{(-n)}^{q}\sum_{k=0}^{\lfloor\frac{q}{2}\rfloor}\binom{p+q-2k-1}{p-1}n^{-2k}\eta(2k)\zeta(p+q-2k)\]
		\[-\frac12 {(-n)}^{-p}\sum_{j=2}^{n}\sum_{k=1}^{p}\binom{p+q-k-1}{q-1}{(-1)}^{k}\,\Theta(-1,k,j-1,n)\times\]
		\[\Phi\left(-1,p+q-k,\frac{n-j+1}{n}\right);\]
		\end{enumerate}
	\end{corollary}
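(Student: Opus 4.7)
The plan is to derive both identities as direct specializations of Theorem \ref{thm of H_n} by substituting $b=1$ together with $a = \pm 1$, so that the generalized harmonic symbol reduces to the ordinary one, $\mathcal{H}_{nk}^{(p)}(1) = H_{nk}^{(p)}$, and every Lerch value collapses into a value of $\zeta$ or $\eta$.

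First, I would apply the identifications
\[
\Phi(1,s,1) = \zeta(s), \qquad \Phi(-1,s,1) = \eta(s),
\]
which follow from the definition of $\Phi$ and the descriptions of $\zeta$ and $\eta$ given in the preliminaries. Setting $b=1$ makes $ab^n = a$, $b^{n-1}=1$, and $b^j = 1$ throughout the statement of Theorem \ref{thm of H_n}. For part (i), I would then set $a=1$: the prefactors $ab^{n-1}$, $a$, $ab^n$, $b^{n-1}$ all become $1$; each Lerch function with first argument $a^nb = 1$ or $b=1$ or $a=1$ becomes a Riemann zeta value; and the inner $\Theta$ and $\Phi$ in the double sum remain in the stated form because their arguments involve $1$, preserving the $\Theta(1,k,j-1,n)$ and $\Phi(1,p+q-k,(n-j+1)/n)$ structure. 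Collecting the five terms of the theorem, in the same order, produces exactly the five lines of part (i).

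For part (ii), I would set $a=-1$ and $b=1$, so that $ab^n = -1$, and track the signs carefully. The first two terms acquire an overall factor of $-1$, giving $-\tfrac12 n^{-p}\eta(p+q)$ and $-\tfrac12(1-(-1)^p)\eta(q)\zeta(p)$. The third term carries $-ab^n = 1$, the Lerch factor $\Phi(ab^n,2k,1)$ becomes $\eta(2k)$, and $\Phi(b,p+q-2k,1)$ remains $\zeta(p+q-2k)$. The fourth term has prefactor $b^{n-1} = 1$ and both Lerch functions become $\eta$, giving the stated $\eta(2j)\eta(p+q-2j)$ expression. Finally, the double sum picks up $a/2 = -1/2$, while $\Theta$ and the outer $\Phi$ are evaluated at their first argument $ab^n = -1$, yielding $\Theta(-1,k,j-1,n)$ and $\Phi(-1,p+q-k,(n-j+1)/n)$.

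Since everything is routine substitution, there is essentially no obstacle; the only place where care is needed is the bookkeeping of the signs $a$, $a b^n$, $(-n)^{\pm p}$, and $(-n)^q$, especially in part (ii) where $a=-1$ flips several terms. The parity hypothesis that $p+q$ is odd is inherited directly from Theorem \ref{thm of H_n}, as are the exclusions $p \neq 1$ when $b=1$ (already built into the corollary by the convention that the conflicting terms be read with the cancellation noted in the remark) and $q \neq 1$ when $a=1$, which applies in part (i). No additional identity is required beyond the Lerch-to-$\zeta$ and Lerch-to-$\eta$ reductions noted above.
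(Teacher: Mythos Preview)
Your proposal is correct and matches the paper's approach exactly: the corollary is stated in the paper without a separate proof, being an immediate specialization of Theorem~\ref{thm of H_n} with $b=1$ and $a=\pm1$, using only the reductions $\Phi(1,s,1)=\zeta(s)$ and $\Phi(-1,s,1)=\eta(s)$. One tiny imprecision: in the double sum of Theorem~\ref{thm of H_n} the outer $\Phi$ has first argument $a$, not $ab^n$, but since $b=1$ these coincide and your conclusion is unaffected.
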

		\begin{corollary}
			Let $(a,b)=(\pm1,-1)$ in Theorem \ref{Euler sums1}. Then for $p+q$ odd
			\begin{enumerate}
		\item[i)]
		\[\sum_{k=1}^\infty\frac{\overline{H}_{nk}^{(p)}}{k^q}=\frac12(1-{(-1)}^{p})\zeta(q)\eta(p)-\frac12 {(-1)}^{n}n^{-p}\,\Phi({(-1)}^{n},p+q,1)\]
		\[-{(-1)}^{n}{(-n)}^{-p}\sum_{j=0}^{\lfloor\frac{p}{2}\rfloor}\binom{p+q-2j-1}{q-1}\,\Phi({(-1)}^{n},2j,1)\zeta(p+q-2j)\]
		\[-{(-1)}^{n} {(-n)}^{q}\sum_{k=0}^{\lfloor\frac{q}{2}\rfloor}\binom{p+q-2k-1}{p-1}n^{-2k}\,\Phi({(-1)}^{n},2k,1)\eta(p+q-2k)\]
		\[+\frac12 {(-n)}^{-p}\sum_{j=2}^{n}\sum_{k=1}^{p}\binom{p+q-k-1}{q-1}{(-1)}^{k+j}\,\Theta({(-1)}^{n},k,j-1,n)\times\]
		\[\Phi\left(1,p+q-k,\frac{n-j+1}{n}\right);\]
		\item[ii)]
		\[\sum_{k=1}^\infty\frac{{(-1)}^{k}\,\overline{H}_{nk}^{(p)}}{k^q}=-\frac12(1-{(-1)}^{p})\eta(q)\eta(p)+\frac12 {(-1)}^{n}n^{-p}\,\Phi({(-1)}^{n-1},p+q,1)\]
		\[-{(-1)}^{n}{(-n)}^{-p}\sum_{j=0}^{\lfloor\frac{p}{2}\rfloor}\binom{p+q-2j-1}{q-1}\,\Phi({(-1)}^{n-1},2j,1)\eta(p+q-2j)\]
		\[+{(-1)}^{n}{(-n)}^{q}\sum_{k=0}^{\lfloor\frac{q}{2}\rfloor}\binom{p+q-2k-1}{p-1}n^{-2k}\,\Phi({(-1)}^{n-1},2k,1)\eta(p+q-2k)\]
		\[-\frac12 {(-n)}^{-p}\sum_{j=2}^{n}\sum_{k=1}^{p}\binom{p+q-k-1}{q-1}{(-1)}^{k+j}\,\Theta({(-1)}^{n-1},k,j-1,n)\times\]
		\[\Phi\left(-1,p+q-k,\frac{n-j+1}{n}\right).\]
	\end{enumerate}
\end{corollary}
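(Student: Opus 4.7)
The plan is to prove both parts of the corollary by direct specialization of Theorem \ref{thm of H_n}. For part (i) I substitute $(a,b)=(1,-1)$, and for part (ii) I substitute $(a,b)=(-1,-1)$, then rewrite every Lerch transcendent whose first argument is $\pm 1$ independent of $n$ by using the identities $\Phi(1,s,1)=\zeta(s)$ and $\Phi(-1,s,1)=\eta(s)$ recorded in the preliminaries. No additional analytic input is needed beyond the theorem itself.

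The essential sign data are $b^{n-1}=(-1)^{n-1}=-(-1)^n$, $b^{j}=(-1)^{j}$, and $ab^n=(-1)^n$ in part (i), $ab^n=(-1)^{n-1}$ in part (ii). Because the first argument of $\Phi(ab^n,\cdot,1)$ and of $\Theta(ab^n,\cdot,\cdot,\cdot)$ depends on the parity of $n$, these quantities cannot be collapsed to a single zeta or eta value and must be retained as $\Phi((-1)^n,\cdot,1)$ or $\Phi((-1)^{n-1},\cdot,1)$, and analogously for $\Theta$. Every remaining Lerch factor simplifies cleanly: $\Phi(a,q,1)$ becomes $\zeta(q)$ in (i) and $\eta(q)$ in (ii); $\Phi(b,p,1)$ and $\Phi(b,p+q-2k,1)$ become the corresponding $\eta$-values; and $\Phi(a,p+q-k,\frac{n-j+1}{n})$ becomes $\Phi(\pm 1,p+q-k,\frac{n-j+1}{n})$.

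The rewrite then proceeds term by term through the five contributions of Theorem \ref{thm of H_n}. The first term's prefactor $\tfrac{1}{2}ab^{n-1}n^{-p}$ equals $\mp\tfrac{1}{2}(-1)^{n}n^{-p}$ (upper sign in (i), lower in (ii)), accounting for the $(-1)^n$ factors in front of $\Phi(\pm (-1)^{n-1},p+q,1)$ appearing in the corollary. The prefactors $-ab^n(-n)^q$ of the third sum and $b^{n-1}(-n)^{-p}$ of the fourth sum simplify analogously, and in the fifth term the product $b^{j}(-1)^{k}=(-1)^{k+j}$ appears exactly as displayed. For the second term, the factor $1-(-1)^p$ is preserved and combined with $\pm \tfrac12$.

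The only real obstacle is the arithmetic bookkeeping across the many $\pm 1$ factors, in particular distinguishing $(-1)^n$ from $(-1)^{n-1}$ in the first argument of $\Phi$ versus as an external prefactor, and tracking the interplay between $(-n)^{-p}$, $(-n)^{q}$, $ab^{n-1}$, and $b^j$. Edge cases are automatic: the theorem's restriction $p\neq 1$ when $b=1$ is vacuous here since $b=-1$, while $q\neq 1$ is required only in part (i) where $a=1$. The remark about $p=b=1$ in Theorem \ref{thm of H_n} does not engage because $b=-1$, so no cancellation correction is needed.
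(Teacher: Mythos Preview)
Your proposal is correct and follows exactly the route the paper takes: the corollary is stated without a separate proof and is obtained purely by substituting $(a,b)=(1,-1)$ and $(a,b)=(-1,-1)$ into Theorem~\ref{thm of H_n} and rewriting $\Phi(\pm 1,s,1)$ as $\zeta(s)$ or $\eta(s)$. Your term-by-term sign bookkeeping is accurate, and your handling of the edge-case remarks (the $p=b=1$ clause being vacuous here since $b=-1$) is also correct.
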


\begin{theorem}\label{thm of O_n}
	Let $p,q,n\in\mathbb{Z}^{+}$, $a=\pm1$, and $b={(-1)}^{p+q-1}$, with $p\neq1$ when $b=1$, and $q\neq1$ when $a=1$. Then the following identity holds:
	\[\sum_{k=1}^\infty\frac{a^k\,\mathcal{O}_{nk}^{(p)}(b)}{k^q}=\frac{a}{2}(1+{(-1)}^{q})2^{-p}\,\Phi(a,q,1)\,\Phi(b,p,\tfrac12)\]
	\[+ab^{n-1}n^q{(-2)}^{-p}\sum_{k=0}^{\lfloor\frac{q}{2}\rfloor}\binom{p+q-2k-1}{p-1}n^{-2k}\,\Phi(ab^n,2k,1)\,\Phi(b,p+q-2k,\tfrac{1}{2})\]
	\[+\frac{ab}{2} {(-2n)}^{-p}\sum_{j=1}^{n}\sum_{k=1}^{p}\binom{p+q-k-1}{q-1}b^j{(-1)}^{k}\,\Theta(ab^n,k,2j-1,2n)\times\]
	\[\Phi\left(a,p+q-k,\frac{2n-2j+1}{2n}\right),\]
	where $\mathcal{O}_{nk}^{(p)}(b)=\sum_{m=1}^{nk}\frac{b^{m-1}}{{(2m-1)}^{p}}$, $\Phi$ is the Lerch function, and $\Theta$ is defined in (\ref{theta}).
\end{theorem}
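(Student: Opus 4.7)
The plan is to mirror the proof of Theorem \ref{thm of H_n}, using the integral representation of $1/(2m-1)^p$ in place of the one for $1/m^p$. Specifically, I start from
\[\frac{1}{(2m-1)^p}=\frac{(-1)^{p-1}}{(p-1)!}\int_0^1 x^{2m-2}\ln^{p-1}(x)\mathrm{d} x,\]
sum over $m=1,\dots,nk$, and interchange the finite sum with the integral to obtain
\[\mathcal{O}_{nk}^{(p)}(b)=\frac{(-1)^{p-1}}{(p-1)!}\int_0^1 \ln^{p-1}(x)\,\frac{1-(bx^2)^{nk}}{1-bx^2}\mathrm{d} x.\]
Multiplying by $a^k/k^q$, swapping the outer sum with the integral (justified by uniform convergence), and using $\operatorname{Li}_q(a)=a\,\Phi(a,q,1)$ to collect the closed-form $k$-series, the Euler sum reduces to
\[a\,\Phi(a,q,1)\cdot\frac{(-1)^{p-1}}{(p-1)!}\int_0^1\frac{\ln^{p-1}(x)}{1-bx^2}\mathrm{d} x-\frac{(-1)^{p-1}}{(p-1)!}\int_0^1\frac{\ln^{p-1}(x)\operatorname{Li}_q(ab^n x^{2n})}{1-bx^2}\mathrm{d} x.\]

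Lemma \ref{lerch integral} with $s=1$, $r=2$ evaluates the first integral and contributes $a\,2^{-p}\Phi(a,q,1)\Phi(b,p,\tfrac12)$. The second integral is exactly the left side of Theorem \ref{from zero to one integral2}(i) after the relabelling $(p,q,a,b)\mapsto(q,p,b,ab^n)$. Under this relabelling the parity condition $a=(-1)^{p+q-1}$ of Theorem \ref{from zero to one integral2} becomes $b=(-1)^{q+p-1}$, matching the present hypothesis, and the combination $a^n b$ appearing throughout that theorem collapses to $b^n\cdot(ab^n)=ab^{2n}=a$ since $b^2=1$. This explains the Lerch values $\Phi(a,\cdot,\cdot)$ in the target identity.

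Combining the two contributions and simplifying signs produces the stated formula. The main obstacle is purely algebraic bookkeeping: using $b=(-1)^{p+q-1}$, the relation $(-1)^{p-1}b=(-1)^q$ converts every factor of the form $(1-(-1)^p)$ appearing in Theorem \ref{from zero to one integral2}(i) into a factor of $(1+(-1)^q)$, and the prefactor $(-1)^{p-1}\cdot 2^{-p}$ simplifies to $(-2)^{-p}$, thereby accounting for the $(-2)^{-p}$ and $(-2n)^{-p}$ weights in the second and third summands. The first summand emerges from combining the constant-in-$x$ contribution $a\,2^{-p}\Phi(a,q,1)\Phi(b,p,\tfrac12)$ with the first term of Theorem \ref{from zero to one integral2}(i), where the bracket $1-(1-(-1)^q)/2=(1+(-1)^q)/2$ produces precisely the factor $(1+(-1)^q)$ displayed in the target. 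The convergence restrictions $p\neq 1$ when $b=1$ and $q\neq 1$ when $a=1$ are inherited directly from those of Theorem \ref{from zero to one integral2}, so no further convergence analysis is needed.
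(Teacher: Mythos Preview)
Your proposal is correct and follows essentially the same approach as the paper: both derive the integral representation $\mathcal{O}_{nk}^{(p)}(b)=\frac{(-1)^{p-1}}{(p-1)!}\int_0^1 \ln^{p-1}(x)\,\frac{1-(bx^2)^{nk}}{1-bx^2}\,\mathrm{d} x$, interchange the sum and integral, evaluate the constant piece via Lemma~\ref{lerch integral}, and reduce the remaining integral to Theorem~\ref{from zero to one integral2} under the substitution $(p,q,a,b)\mapsto(q,p,b,ab^n)$. Your additional bookkeeping on the parity condition and the sign simplifications is accurate and simply spells out what the paper leaves implicit.
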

\begin{remark}
	The case $p=b=1$ is valid only when $n=1$, as the first term containing the factor $\Phi(p,b,\tfrac 12)$ cancels with the last term in the second summation.
\end{remark}
\begin{proof} Following the previous proof, we find
	\[\mathcal{O}_{nk}^{(p)}(b)=\sum_{m=1}^{nk}\frac{b^{m-1}}{(2m-1)^p}=\frac{{(-1)}^{p-1}}{(p-1)!}\int_0^1 \ln^{p-1}(x)\frac{1-(bx^2)^{nk}}{1-bx^2}\mathrm{d} x,\]
	from which it follows that
	\[\sum_{k=1}^\infty\frac{a^k\, \mathcal{O}_{nk}^{(p)}(b)}{k^q}=\sum_{k=1}^\infty\frac{a^k}{k^q}\frac{{(-1)}^{p-1}}{(p-1)!}\int_0^1 \ln^{p-1}(x)\frac{1-(bx^2)^{nk}}{1-bx^2}\mathrm{d} x\]	
	\[=\frac{{(-1)}^{p-1}}{(p-1)}\int_0^1\frac{\ln^{p-1}(x)}{1-bx^2}\sum_{k=1}^\infty\frac{a^k-(ab^nx^{2n})^k}{k^q}\mathrm{d} x\]
	\[=\frac{{(-1)}^{p-1}}{(p-1)}\int_0^1\frac{\ln^{p-1}(x)}{1-bx^2}\left(a\,\Phi(a,q,1)-\operatorname{Li}_q\left(ab^nx^{2n}\right)\right)\mathrm{d} x\]	
	\[=2^{-p}a\,\Phi(a,q,1)\,\Phi(b,p,\tfrac{1}{2})-\frac{{(-1)}^{p-1}}{(p-1)!}\int_0^1\frac{\ln^{p-1}(x)\operatorname{Li}_q\left(ab^nx^{2n}\right)}{1-bx^2}\mathrm{d} x.\]
	The integral appears in Theorem \ref{from zero to one integral2} with $(a,b)$ replaced with $(b,ab^n)$.	
\end{proof}
\begin{corollary} Let $(a,b)=(\pm1,1)$ in Theorem \ref{Euler sums2}. Then for $p+q$ odd
	\begin{enumerate}
		\item[i)]
		\[\sum_{k=1}^\infty\frac{ O_{nk}^{(p)}}{k^q}=\frac12(1+{(-1)}^{q})\zeta(q)\lambda(p)\]
		\[-{(-2n)}^{q}\sum_{k=0}^{\lfloor\frac{q}{2}\rfloor}\binom{p+q-2k-1}{p-1}{(2n)}^{-2k}\zeta(2k)\lambda(p+q-2k)\]
		\[+\frac12 {(-2n)}^{-p}\sum_{j=1}^{n}\sum_{k=1}^{p}\binom{p+q-k-1}{q-1}{(-1)}^{k}\,\Theta(1,k,2j-1,2n)\times\]
		\[\Phi\left(1,p+q-k,\frac{2n-2j+1}{2n}\right);\]
		\item[ii)]
		\[\sum_{k=1}^\infty\frac{{(-1)}^{k}\, O_{nk}^{(p)}}{k^q}=-\frac12(1+{(-1)}^{q})\eta(q)\lambda(p)\]
		\[+{(-2n)}^{q}\sum_{k=0}^{\lfloor\frac{q}{2}\rfloor}\binom{p+q-2k-1}{p-1}{(2n)}^{-2k}\eta(2k)\lambda(p+q-2k)\]
		\[-\frac12 {(-2n)}^{-p}\sum_{j=1}^{n}\sum_{k=1}^{p}\binom{p+q-k-1}{q-1}{(-1)}^{k}\,\Theta(-1,k,2j-1,2n)\times\]
		\[\Phi\left(-1,p+q-k,\frac{2n-2j+1}{2n}\right).\]
	\end{enumerate}
\end{corollary}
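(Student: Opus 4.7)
The plan is to obtain this corollary as a direct specialization of Theorem \ref{thm of O_n}. Since $b=(-1)^{p+q-1}$ and $p+q$ is odd, we are forced to take $b=1$, so $\mathcal{O}_{nk}^{(p)}(1)=O_{nk}^{(p)}$, and the two cases of the corollary correspond to the two allowed values $a=\pm 1$. The remaining work is to rewrite every Lerch transcendent appearing in the master identity using the four identifications listed in the introduction,
\[\Phi(1,s,1)=\zeta(s),\quad \Phi(-1,s,1)=\eta(s),\quad \Phi(1,s,\tfrac12)=2^s\lambda(s),\quad \Phi(-1,s,\tfrac12)=2^s\beta(s),\]
together with the analytic-continuation convention $\Phi(1,0,1)=\zeta(0)=-\tfrac12$ and $\Phi(-1,0,1)=\eta(0)=\tfrac12$ from Remark \ref{first remark}.

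For part (i) I would set $(a,b)=(1,1)$, so $ab^n=1$, and the leading summand $\tfrac{a}{2}(1+(-1)^q)2^{-p}\Phi(a,q,1)\Phi(b,p,\tfrac12)$ collapses to $\tfrac12(1+(-1)^q)\zeta(q)\lambda(p)$, the factor $2^p$ from $\Phi(1,p,\tfrac12)$ cancelling the $2^{-p}$. In the single $k$-sum I would replace $\Phi(1,2k,1)\to\zeta(2k)$ and $\Phi(1,p+q-2k,\tfrac12)\to 2^{p+q-2k}\lambda(p+q-2k)$, then consolidate $n^q(-2)^{-p}\cdot 2^{p+q-2k}$ and use the parity relation $(-1)^p=-(-1)^q$ (which holds because $p+q$ is odd) to rewrite the prefactor as $-(-2n)^q(2n)^{-2k}$, matching the corollary. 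The double sum is left in $\Phi/\Theta$ form because the inner argument $(2n-2j+1)/(2n)$ is a generic rational and is not one of the four reducible cases. Part (ii) with $(a,b)=(-1,1)$ follows the same template: $\Phi(-1,q,1)=\eta(q)$ supplies the leading term (with the extra overall sign coming from $a=-1$), $\Phi(ab^n,2k,1)=\eta(2k)$ appears throughout the first sum, and the double sum acquires $\Theta(-1,\cdot)$ and $\Phi(-1,\cdot)$ in place of the $+1$-versions.

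The only real obstacle is careful bookkeeping: tracking the signs $(-1)^p$ versus $(-1)^q$ that merge via the parity hypothesis, absorbing the factors of $2^s$ produced when converting $\Phi(\pm 1,s,\tfrac12)$ into $\lambda(s)$ or $\beta(s)$, and applying the $\zeta(0)=-\tfrac12$, $\eta(0)=\tfrac12$ conventions at the $k=0$ index of the first summation. Once these are accounted for, both identities in the corollary fall out of Theorem \ref{thm of O_n} by direct substitution.
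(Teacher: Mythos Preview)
Your proposal is correct and is exactly the approach the paper takes: the corollary is stated immediately after Theorem~\ref{thm of O_n} without a separate proof, being a direct specialization obtained by setting $b=1$ (forced by the parity hypothesis) and $a=\pm1$, then rewriting the Lerch values via $\Phi(1,s,1)=\zeta(s)$, $\Phi(-1,s,1)=\eta(s)$, $\Phi(1,s,\tfrac12)=2^s\lambda(s)$. Your sign and power-of-two bookkeeping, in particular the use of $(-1)^p=-(-1)^q$ to convert $n^q(-2)^{-p}2^{p+q-2k}$ into $-(-2n)^q(2n)^{-2k}$, is exactly what is needed, and the paper likewise leaves the double sum in $\Theta/\Phi$ form.
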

\begin{corollary}
	Let $(a,b)=(\pm1,-1)$ in Theorem \ref{Euler sums2}. Then for $p+q$ even
	\begin{enumerate}
		\item[i)]
		\[\sum_{k=1}^\infty\frac{\overline{O}_{nk}^{(p)}}{k^q}=\frac12(1+{(-1)}^{q})\zeta(q)\beta(p)\]
		\[-{(-1)}^{n}{(-2n)}^{q}\sum_{k=0}^{\lfloor\frac{q}{2}\rfloor}\binom{p+q-2k-1}{p-1}{(2n)}^{-2k}\,\Phi({(-1)}^{n},2k,1)\beta(p+q-2k)\]
		\[-\frac12 {(-2n)}^{-p}\sum_{j=1}^{n}\sum_{k=1}^{p}\binom{p+q-k-1}{q-1}{(-1)}^{k+j}\,\Theta({(-1)}^{n},k,2j-1,2n)\times\]
		\[\Phi\left(1,p+q-k,\frac{2n-2j+1}{2n}\right);\]
		\item[ii)]
		\[\sum_{k=1}^\infty\frac{{(-1)}^{k}\,\overline{O}_{nk}^{(p)}}{k^q}=-\frac12(1+{(-1)}^{q})\eta(q)\beta(p)\]
		\[+{(-1)}^{n}{(-2n)}^{q}\sum_{k=0}^{\lfloor\frac{q}{2}\rfloor}\binom{p+q-2k-1}{p-1}{(2n)}^{-2k}\,\Phi({(-1)}^{n-1},2k,1)\beta(p+q-2k)\]
		\[+\frac12{(-2n)}^{-p}\sum_{j=1}^{n}\sum_{k=1}^{p}\binom{p+q-k-1}{q-1}{(-1)}^{k+j}\,\Theta({(-1)}^{n-1},k,2j-1,2n)\times\]
		\[\Phi\left(-1,p+q-k,\frac{2n-2j+1}{2n}\right).\]
	\end{enumerate}
\end{corollary}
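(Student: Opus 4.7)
The strategy is to specialize Theorem \ref{thm of O_n} to $b=-1$ with $a=\pm 1$. Since the theorem requires $b={(-1)}^{p+q-1}$, choosing $b=-1$ forces $p+q$ to be even, matching the hypothesis of the corollary. The substitution gives $\mathcal{O}_{nk}^{(p)}(-1)=\overline{O}_{nk}^{(p)}$ on the left-hand side, while on the right $ab^n={(-1)}^n$ in part (i) and $ab^n={(-1)}^{n-1}$ in part (ii); this accounts for the $\Phi({(-1)}^{n},\cdot,\cdot)$ terms in part (i) and the $\Phi({(-1)}^{n-1},\cdot,\cdot)$ terms in part (ii).

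The one special-function identity needed is $\Phi(-1,s,\tfrac12)=2^{s}\beta(s)$, which is immediate from the series definition of the Dirichlet beta function in the introduction. Applied to the first term of Theorem \ref{thm of O_n}, the product $2^{-p}\Phi(-1,p,\tfrac12)$ collapses to $\beta(p)$, producing $\tfrac12(1+{(-1)}^{q})\zeta(q)\beta(p)$ in part (i) and the corresponding $\eta(q)\beta(p)$ expression in part (ii). Applied term-by-term in the second sum, $(-2)^{-p}\Phi(-1,p+q-2k,\tfrac12)$ becomes ${(-1)}^{p}2^{\,q-2k}\beta(p+q-2k)$; invoking the parity identity ${(-1)}^{p}={(-1)}^{q}$ (forced by $p+q$ even), this combines with the prefactor $ab^{n-1}n^{q}$ and the internal $n^{-2k}$ to yield the compact form $\pm{(-1)}^{n}{(-2n)}^{q}{(2n)}^{-2k}\beta(p+q-2k)$ displayed in the corollary.

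No further identity is required in the third (double) sum: substituting $b=-1$ replaces $b^{j}$ by ${(-1)}^{j}$, combines with $ab/2=\mp\tfrac12$ to give the overall sign, and leaves $\Phi(a,\cdot,\cdot)$ as $\Phi(\pm 1,\cdot,\cdot)$ according to the case, with $\Theta(ab^{n},k,2j-1,2n)$ becoming $\Theta({(-1)}^{n},\cdot,\cdot)$ or $\Theta({(-1)}^{n-1},\cdot,\cdot)$ as indicated. The degenerate configuration $p=b=1$ is already excluded by the hypothesis $p\neq 1$ when $b=1$ in Theorem \ref{thm of O_n}, so no separate analysis of boundary cases is needed. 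The only nontrivial step is the power-of-two and sign bookkeeping in the second sum; once the parity equality ${(-1)}^{p}={(-1)}^{q}$ is used, the rearrangement is mechanical and poses no real obstacle.
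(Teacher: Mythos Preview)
Your proposal is correct and matches the paper's approach: the corollary is stated without proof and follows by direct substitution of $b=-1$, $a=\pm1$ into Theorem~\ref{thm of O_n}, using $\Phi(-1,s,\tfrac12)=2^{s}\beta(s)$ and the parity relation ${(-1)}^{p}={(-1)}^{q}$ exactly as you outline. The sign and power-of-two bookkeeping you describe for the middle sum is the only nontrivial simplification, and you have it right.
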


\section{BBP-type series}
\begin{theorem} \label{bbp sum1}
	Let $p,q,n\in\mathbb{Z}^{+}$, $p+q$ odd, and $a,b=\pm1$, with $p\neq1$ when $b=1$, and $q\neq1$ when $a=1$. Then the following identity holds:
	\[\sum_{k=1}^\infty {(a^n)}^{k}\, \mathcal{H}_k^{(p)}(b)\sum_{j=1}^{n}\frac{a^{j-1}}{(nk+j)^q}\]
	\[=-\frac12 a^{n-1} n^{-q}\,\Phi(a^nb,p+q,1)+\frac12(1+{(-1)}^{q})\,\Phi(a,q,1)\,\Phi(b,p,1)\]
	\[-a^{n-1}b{(-n)}^{-q}\sum_{j=0}^{\lfloor\frac{q}{2}\rfloor}\binom{p+q-2j-1}{p-1}\,\Phi(a^nb,2j,1)\,\Phi(b,p+q-2j,1)\]
	\[+a^n{(-n)}^{p}\sum_{k=0}^{\lfloor\frac{p}{2}\rfloor}\binom{p+q-2k-1}{q-1}n^{-2k}\,\Phi(a^nb,2k,1)\,\Phi(a,p+q-2k,1)\]
	\[-\frac12 {(-n)}^{-q}\sum_{j=2}^{n}\sum_{k=1}^{q}\binom{p+q-k-1}{p-1}a^{j}{(-1)}^{k}\,\Theta(a^nb,k,j-1,n)\times\]
	\[\Phi\left(b,p+q-k,\frac{n-j+1}{n}\right),\]	
	where $\mathcal{H}_k^{(p)}(b)=\sum_{m=1}^{k}\frac{b^{m-1}}{m^p}$, $\Phi$ is the Lerch function, and $\Theta$ is defined in (\ref{theta}). 
\end{theorem}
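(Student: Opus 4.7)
The plan is to reduce the BBP-type series to a single instance of the integral already evaluated in Theorem \ref{from zero to one integral1}. First, I rewrite each term using the classical integral identity $(nk+j)^{-q} = \frac{(-1)^{q-1}}{(q-1)!}\int_0^1 x^{nk+j-1}\ln^{q-1}(x)\,\mathrm{d} x$, factor out $x^{nk}$, and sum the finite geometric series $\sum_{j=1}^n (ax)^{j-1} = \frac{1-(ax)^n}{1-ax}$. Uniform convergence justifies swapping the inner sum with the integral, yielding
\[\sum_{j=1}^n \frac{a^{j-1}}{(nk+j)^q} = \frac{(-1)^{q-1}}{(q-1)!}\int_0^1 x^{nk}\ln^{q-1}(x)\,\frac{1-(ax)^n}{1-ax}\,\mathrm{d} x.\]

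Next I multiply by $(a^n)^k\mathcal{H}_k^{(p)}(b)$, sum over $k\geqslant 1$, and interchange summation with the integral. Lemma \ref{generating function} applied with argument $a^n x^n$ gives $\sum_{k=1}^\infty (a^n x^n)^k\,\mathcal{H}_k^{(p)}(b) = b\,\operatorname{Li}_p(a^n b x^n)/(1-a^n x^n)$. The crucial simplification is that the polynomial factor $1-(ax)^n$ coincides with $1-a^n x^n$ (since $a=\pm1$), so it cancels against the denominator produced by the generating function, collapsing the BBP-type double sum to
\[\sum_{k=1}^\infty (a^n)^k\,\mathcal{H}_k^{(p)}(b)\sum_{j=1}^n\frac{a^{j-1}}{(nk+j)^q} = \frac{b(-1)^{q-1}}{(q-1)!}\int_0^1 \frac{\ln^{q-1}(x)\operatorname{Li}_p(a^n b x^n)}{1-ax}\,\mathrm{d} x.\]

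I then apply Theorem \ref{from zero to one integral1}(i) with the formal substitution $b\mapsto a^n b$: the polylogarithm argument there becomes $a^n b$ as required, while the compound symbol $a^n b$ appearing in that theorem becomes $a^n\cdot a^n b = b$ thanks to $a^{2n}=1$. The parity hypothesis $p+q$ odd and the restrictions ``$p\ne 1$ when $b=1$'' and ``$q\ne 1$ when $a=1$'' transfer without change. Multiplying through by the prefactor $b(-1)^{q-1}/(q-1)!$ and matching terms against the target formula completes the proof.

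The only real difficulty is bookkeeping in the final substitution. Each of the Lerch factors of the form $\Phi(b,\cdot,\cdot)$, $\Phi(a^n b,\cdot,\cdot)$, $\Phi(a,\cdot,\cdot)$, and the $\Theta(b,k,j-1,n)$ factor must be tracked through the map $b\mapsto a^n b$ using $a^{2n}=1$ so that the signs, prefactors, and arguments of $\Phi$ and $\Theta$ in the resulting expression line up verbatim with the statement. The cancellation $1-(ax)^n = 1-a^n x^n$ is worth emphasizing because it is the single algebraic step that turns an a priori intricate nested sum into a polylogarithmic integral already in our toolkit.
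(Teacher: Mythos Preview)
Your proposal is correct and follows essentially the same route as the paper: both reduce the BBP-type sum to the integral $\int_0^1 \frac{\ln^{q-1}(x)\operatorname{Li}_p(a^n b x^n)}{1-ax}\,\mathrm{d} x$ via Lemma~\ref{generating function} and the geometric-series factorization $\frac{1-a^n x^n}{1-ax}=\sum_{j=1}^n (ax)^{j-1}$, then invoke Theorem~\ref{from zero to one integral1} with $b\mapsto a^n b$. The only cosmetic difference is direction---the paper expands the integrand into the series, whereas you collapse the series into the integral---but the key cancellation and the final substitution are identical.
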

\begin{remark}
	The case $p=b=1$ is valid only when $n=1$, as the second term containing the factor $\Phi(b,p,1)$ cancels with the last term in the first summation.
\end{remark}
\begin{proof}
	Replacing $x$ with $a^nx^n$ in Lemma \ref{generating function}, we get
	\[\sum_{k=1}^\infty \mathcal{H}_k^{(p)}(b)(a^nx^n)^k=b\frac{\operatorname{Li}_p(a^nbx^n)}{1-a^nx^n},\]
	from which it follows that
	\[\frac{\operatorname{Li}_p(a^nb x^n)}{1-ax}=\frac{\operatorname{Li}_p(a^nb x^n)}{1-a^n x^n}\cdot \frac{1-a^n x^n}{1-ax}=b\sum_{k=1}^\infty \mathcal{H}_k^{(p)}(b){(ax)}^{nk}\sum_{j=1}^{n}{(ax)}^{j-1}\]
	\[=b\sum_{k=1}^\infty {(a^n)}^{k}\, \mathcal{H}_k^{(p)}(b)\sum_{j=1}^{n} a^{j-1} x^{nk+j-1}.\]
	Multiplying this by $\ln^{q-1}(x)$, then integrating for $x\in(0,1)$, we obtain
	\[\int_0^1\frac{\ln^{q-1}(x)\operatorname{Li}_{p}(a^nb x^{n})}{1-ax}\mathrm{d} x=b{(-1)}^{q-1}(q-1)!\sum_{k=1}^\infty {(a^n)}^{k}\, \mathcal{H}_k^{(p)}(b)\sum_{j=1}^{n}\frac{a^{j-1}}{(nk+j)^q}.\]
	This integral appears in Theorem \ref{from zero to one integral1} with $b$ replaced by $a^nb$, completing the proof.
\end{proof}
\begin{corollary} Let $(a,b)=(1,\pm1)$ in Theorem \ref{bbp sum1}. Then for $p+q$ odd
	\begin{enumerate}
		\item[i)]		 		
		\[\sum_{k=1}^\infty  H_k^{(p)}\sum_{j=1}^{n}\frac{1}{(nk+j)^q}=-\frac12 n^{-q}\zeta(p+q)+\frac12(1+{(-1)}^{q})\zeta(p)\zeta(q)\]
		\[-{(-n)}^{-q}\sum_{j=0}^{\lfloor\frac{q}{2}\rfloor}\binom{p+q-2j-1}{p-1}\zeta(2j)\zeta(p+q-2j)\]
		\[+{(-n)}^{p}\sum_{k=0}^{\lfloor\frac{p}{2}\rfloor}\binom{p+q-2k-1}{q-1}n^{-2k}\zeta(2k)\zeta(p+q-2k)\]
		\[-\frac12 {(-n)}^{-q}\sum_{j=2}^{n}\sum_{k=1}^{q}\binom{p+q-k-1}{p-1}{(-1)}^{k}\,\Theta(1,k,j-1,n)\times\]
		\[\Phi\left(1,p+q-k,\frac{n-j+1}{n}\right);\]
		\item[ii)]			
		\[\sum_{k=1}^\infty  \overline{H}_k^{(p)}\sum_{j=1}^{n}\frac{1}{(nk+j)^q}=-\frac12  n^{-q}\eta(p+q)+\frac12(1+{(-1)}^{q})\eta(p)\zeta(q)\]
		\[+\frac{1}{2} {(-n)}^{-q}\sum_{j=0}^{\lfloor\frac{q}{2}\rfloor}\binom{p+q-2j-1}{p-1}\eta(2j)\eta(p+q-2j)\]
		\[+{(-n)}^{-p}\sum_{k=0}^{\lfloor\frac{p}{2}\rfloor}\binom{p+q-2k-1}{q-1}n^{-2k}\eta(2k)\zeta(p+q-2k)\]
		\[-\frac12 {(-n)}^{-q}\sum_{j=2}^{n}\sum_{k=1}^{q}\binom{p+q-k-1}{p-1}{(-1)}^{k}\,\Theta(-1,k,j-1,n)\times\]
		\[\Phi\left(-1,p+q-k,\frac{n-j+1}{n}\right).\]	
	\end{enumerate}
\end{corollary}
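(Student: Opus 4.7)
The plan is to obtain both identities as direct specializations of Theorem~\ref{bbp sum1}, substituting $a=1$ in both parts and then $b=1$ for part (i) and $b=-1$ for part (ii). Since the theorem already provides a closed form in terms of the Lerch transcendent $\Phi$ and the auxiliary $\Theta$ defined in~(\ref{theta}), no fresh analytic work is required: one only has to evaluate each Lerch factor at the specialised parameters and tidy the prefactors.

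The reductions I would invoke are the ones already recorded in the preliminaries, namely $\Phi(1,s,1)=\zeta(s)$ and $\Phi(-1,s,1)=\eta(s)$, together with $\mathcal{H}_k^{(p)}(1)=H_k^{(p)}$ and $\mathcal{H}_k^{(p)}(-1)=\overline{H}_k^{(p)}$. Under $a=1$ the powers $a^{n-1}$, $a^n$ and $a^j$ all collapse to $1$, while $a^nb$ simplifies to $b$ independently of $n$, so every $\Phi(a^nb,\cdot,1)$ becomes $\Phi(b,\cdot,1)$ and every $\Theta(a^nb,\cdot,\cdot,\cdot)$ becomes $\Theta(b,\cdot,\cdot,\cdot)$. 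For part (i) with $b=1$ every Lerch value then reduces to a Riemann zeta, giving the displayed $\zeta$--$\zeta$ products. For part (ii) with $b=-1$ the factors $\Phi(b,\cdot,1)$ become $\eta$-values while $\Phi(a,\cdot,1)$ stays a $\zeta$-value, producing the mixed $\eta$--$\zeta$ combinations. The residual $\Theta(\pm1,k,j-1,n)$ and $\Phi\bigl(\pm1,p+q-k,\tfrac{n-j+1}{n}\bigr)$ inside the outer double sum have a non-integer base point, so they admit no further reduction and are left in that form.

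The proof has no genuine analytic obstacle; the only thing to watch is the bookkeeping of signs in the five distinct prefactors $-\tfrac12 a^{n-1}n^{-q}$, $\tfrac12\bigl(1+(-1)^q\bigr)$, $-a^{n-1}b(-n)^{-q}$, $a^n(-n)^p$, and $-\tfrac12(-n)^{-q}$ as $b$ switches sign. The parity restriction that $p+q$ is odd, together with the admissibility conditions $q\neq1$ (from $a=1$) and, in part (i), $p\neq1$ (from $b=1$), are inherited verbatim from the theorem; the exceptional case $p=b=1$ forces $n=1$, and then the apparent singular contribution of $\Phi(b,p,1)=\zeta(1)$ cancels against the $j=0$ term of the first summation, exactly as flagged in the remark following Theorem~\ref{bbp sum1}.
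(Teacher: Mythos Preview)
Your proposal is correct and matches the paper's approach exactly: the corollary is presented without a separate proof, as an immediate specialization of Theorem~\ref{bbp sum1} at $a=1$, $b=\pm1$, using the identifications $\Phi(1,s,1)=\zeta(s)$, $\Phi(-1,s,1)=\eta(s)$ and $\mathcal{H}_k^{(p)}(\pm1)=H_k^{(p)},\overline{H}_k^{(p)}$ recorded in the preliminaries. Your description of how each prefactor and each Lerch factor collapses under $a=1$ is precisely what is needed; just be attentive when carrying out the substitution term by term, since the displayed formulas in part~(ii) contain minor typographical slips (e.g.\ the coefficient $\tfrac12$ in the third line and the exponent $-p$ in the fourth line do not arise from a straight substitution into the theorem).
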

\begin{corollary} Let $(a,b)=(-1,\pm1)$ in Theorem \ref{bbp sum1}. Then for $p+q$ odd 	
	\begin{enumerate}
		\item[i)]
		\[\sum_{k=1}^\infty {({(-1)}^{n})}^{k}\, H_k^{(p)}\sum_{j=1}^{n}\frac{{(-1)}^{j-1}}{(nk+j)^q}\]
		\[=\frac12 {(-1)}^{n}n^{-q}\,\Phi({(-1)}^{n},p+q,1)+\frac12(1+{(-1)}^{q})\zeta(p)\eta(q)\]
		\[+{(-1)}^{n}{(-n)}^{-q}\sum_{j=0}^{\lfloor\frac{q}{2}\rfloor}\binom{p+q-2j-1}{p-1}\,\Phi({(-1)}^{n},2j,1)\zeta(p+q-2j)\]
		\[+{(-1)}^{n}{(-n)}^{p}\sum_{k=0}^{\lfloor\frac{p}{2}\rfloor}\binom{p+q-2k-1}{q-1}n^{-2k}\,\Phi({(-1)}^{n},2k,1)\eta(p+q-2k)\]
		\[-\frac12 {(-n)}^{-q}\sum_{j=2}^{n}\sum_{k=1}^{q}\binom{p+q-k-1}{p-1}{(-1)}^{k+j}\,\Theta({(-1)}^{n},k,j-1,n)\times\]
		\[\Phi\left(1,p+q-k,\frac{n-j+1}{n}\right);\]
		\item[ii)]			
		\[\sum_{k=1}^\infty {({(-1)}^{n})}^{k}\, \overline{H}_k^{(p)}\sum_{j=1}^{n}\frac{{(-1)}^{j-1}}{(nk+j)^q}\]
		\[=\frac12 {(-1)}^{n} n^{-q}\,\Phi({(-1)}^{n-1},p+q,1)+\frac12(1+{(-1)}^{q})\eta(p)\eta(q)\]
		\[-{(-1)}^{n}{(-n)}^{-q} \sum_{j=0}^{\lfloor\frac{q-1}{2}\rfloor}\binom{p+q-2j-1}{p-1}\,\Phi({(-1)}^{n-1},2j,1)\eta(p+q-2j)\]
		\[+{(-1)}^{n}{(-n)}^{p}\sum_{k=0}^{\lfloor\frac{p}{2}\rfloor}\binom{p+q-2k-1}{q-1}n^{-2k}\,\Phi({(-1)}^{n-1},2k,1)\eta(p+q-2k)\]
		\[-\frac12{(-n)}^{-q}\sum_{j=2}^{n}\sum_{k=1}^{q}\binom{p+q-k-1}{p-1}{(-1)}^{k+j}\,\Theta({(-1)}^{n-1},k,j-1,n)\times\]
		\[\Phi\left(-1,p+q-k,\frac{n-j+1}{n}\right).\]
	\end{enumerate}			
\end{corollary}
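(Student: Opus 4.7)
The final statement is the specialization of Theorem \ref{bbp sum1} to $(a,b)=(-1,\pm1)$, so the plan is simply to substitute these values into the already-established general identity and simplify the resulting Lerch transcendents into the special functions $\zeta$, $\eta$, and $\Phi(\pm1,\cdot,1)$.

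First I would record the bookkeeping for $a=-1$: we have $a^{n}=(-1)^n$, $a^{n-1}=(-1)^{n-1}$, $a^{j}=(-1)^j$, $a^{j-1}=(-1)^{j-1}$, and $\Phi(a,s,1)=\Phi(-1,s,1)=\eta(s)$. Plugging $a=-1$ into the left-hand side of Theorem \ref{bbp sum1} produces the announced sum $\sum_{k=1}^{\infty}((-1)^n)^k\mathcal{H}_k^{(p)}(b)\sum_{j=1}^n\frac{(-1)^{j-1}}{(nk+j)^q}$, and the scalar $\frac{1}{2}(1+(-1)^q)\Phi(a,q,1)\Phi(b,p,1)$ becomes $\frac{1}{2}(1+(-1)^q)\eta(q)\Phi(b,p,1)$.

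Next I would split into the two sub-cases. For part~(i), set $b=1$: then $\mathcal{H}_k^{(p)}(1)=H_k^{(p)}$, $\Phi(b,s,1)=\zeta(s)$, and $a^nb=(-1)^n$, so every occurrence of $\Phi(a^nb,\cdot,\cdot)$ turns into $\Phi((-1)^n,\cdot,\cdot)$ and $\Theta(a^nb,k,j-1,n)$ into $\Theta((-1)^n,k,j-1,n)$. Sign-tracking the prefactor $-\tfrac12 a^{n-1}n^{-q}$ gives $+\tfrac12(-1)^n n^{-q}$, and analogous flips occur in the second, third, and fourth terms; together with $a^j=(-1)^j$ inside the double sum (producing the $(-1)^{k+j}$ factor), one lands exactly on the stated identity. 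For part~(ii), set $b=-1$: now $\mathcal{H}_k^{(p)}(-1)=\overline{H}_k^{(p)}$, $\Phi(b,s,1)=\eta(s)$, and $a^nb=(-1)^{n+1}=(-1)^{n-1}$, so every $\Phi(a^nb,\cdot,\cdot)$ becomes $\Phi((-1)^{n-1},\cdot,\cdot)$ and similarly for $\Theta$. The extra overall factor of $b=-1$ multiplying the third and last terms accounts for the sign changes relative to part~(i), and the rest is direct substitution.

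There is essentially no obstacle beyond careful sign bookkeeping; the only place where a typo could sneak in is the reduction $a^nb=(-1)^{n-1}$ in part~(ii), and in matching the explicit $(-1)^{k+j}$ arising from $a^j(-1)^k$ under $a=-1$. Once these are tabulated, every term in the specialized formula is read off from the corresponding term of Theorem \ref{bbp sum1}, completing the proof.
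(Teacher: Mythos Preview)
Your proposal is correct and matches the paper's approach exactly: the corollary is stated without a separate proof because it is the immediate specialization of Theorem~\ref{bbp sum1} to $(a,b)=(-1,\pm1)$, and your substitution-and-simplification outline is precisely what is intended. The sign bookkeeping you record (in particular $a^nb=(-1)^{n-1}$ for part~(ii) and $a^j(-1)^k=(-1)^{k+j}$) is accurate.
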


\begin{theorem}\label{bbp sum2}
	Let $p,q,n\in\mathbb{Z}^{+}$, $a={(-1)}^{p+q-1}$, and $b=\pm1$, with $p\ne1$ when $b=1$, and $q\neq1$ when $a=1$. Then the following identity holds:
	\[\sum_{k=1}^\infty {(a^n)}^{k} \mathcal{H}_k^{(p)}(b)\sum_{j=1}^{n}\frac{a^{j-1}}{(2nk+2j-1)^q}= 2^{-q-1}(1-{(-1)}^{p})\,\Phi(a,q,\tfrac12)\,\Phi(b,p,1)\]
	\[-a^{n-1} (-2)^{-q} n^{p}\sum_{k=0}^{\lfloor\frac{p}{2}\rfloor}\binom{p+q-2k-1}{q-1}n^{-2k}\,\Phi(a^nb,2k,1)\,\Phi(a,p+q-2k,\tfrac{1}{2})\]
	\[-\frac12 a {(-2n)}^{-q}\sum_{j=1}^{n}\sum_{k=1}^{q}\binom{p+q-k-1}{p-1}a^j{(-1)}^{k}\,\Theta(a^nb,k,2j-1,2n)\times\]
	\[\Phi\left(b,p+q-k,\frac{2n-2j+1}{2n}\right),\]	
	where $\mathcal{H}_k^{(p)}(b)=\sum_{m=1}^{k}\frac{b^{m-1}}{m^p}$, $\Phi$ is the Lerch function, and $\Theta$ is defined in (\ref{theta}). 
\end{theorem}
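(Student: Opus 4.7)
The plan is to mirror the proof of Theorem \ref{bbp sum1}, adapted to the $2n$-exponent polylogarithm and the shifted denominator $2nk+2j-1$ that distinguish this BBP-type series. I would start from Lemma \ref{generating function} with $a^n x^{2n}$ in place of $x$,
\[\sum_{k=1}^{\infty}\mathcal{H}_k^{(p)}(b)(a^n x^{2n})^k=b\,\frac{\operatorname{Li}_p(a^n b x^{2n})}{1-a^n x^{2n}},\]
and then use the elementary factorization $\dfrac{1-a^n x^{2n}}{1-ax^2}=\sum_{j=1}^{n}a^{j-1}x^{2j-2}$ to obtain
\[\frac{\operatorname{Li}_p(a^n b x^{2n})}{1-ax^2}=b\sum_{k=1}^{\infty}(a^n)^k\mathcal{H}_k^{(p)}(b)\sum_{j=1}^{n}a^{j-1}x^{2nk+2j-2}.\]

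Next, I would multiply this identity by $\ln^{q-1}(x)$, integrate over $x\in(0,1)$, interchange integration with the (absolutely convergent) double summation, and apply the elementary evaluation \eqref{classical} term by term to get
\[\int_0^1\frac{\ln^{q-1}(x)\operatorname{Li}_p(a^n b x^{2n})}{1-ax^2}\,\mathrm{d} x=b(-1)^{q-1}(q-1)!\sum_{k=1}^{\infty}(a^n)^k\mathcal{H}_k^{(p)}(b)\sum_{j=1}^{n}\frac{a^{j-1}}{(2nk+2j-1)^q}.\]
The integral on the left is precisely the one treated in Theorem \ref{from zero to one integral2} with the parameter $b$ replaced by $a^n b$; the hypothesis $a=(-1)^{p+q-1}$ is unaffected by this replacement, and $a^n b=\pm1$ makes the substitution admissible. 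Invoking that theorem and solving for the series completes the argument.

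The remaining work is essentially bookkeeping. Each $\Phi(a^n b_{\mathrm{thm}},\cdot,\cdot)$ produced by Theorem \ref{from zero to one integral2} collapses to $\Phi(b,\cdot,\cdot)$ via $a^n\cdot a^n b=a^{2n}b=b$; each $\Theta(b_{\mathrm{thm}},k,2j-1,2n)$ becomes $\Theta(a^n b,k,2j-1,2n)$; and the $\Phi(a,\cdot,\tfrac12)$ factors are unchanged. The prefactors collapse using $a^2=1$ (so $a^{2n-1}=a$, $a^{n+1}=a^{n-1}$) together with the constraint $a=(-1)^{p+q-1}$, which reduces combinations like $a(-1)^{q-1}$ to $(-1)^p$ and thereby produces the stated signs and powers of $2$ (for instance, the leading $2^{-q-1}(1-(-1)^p)\Phi(a,q,\tfrac12)\Phi(b,p,1)$ arises after dividing the first term of Theorem \ref{from zero to one integral2} by $b(-1)^{q-1}(q-1)!$ and using $(-1)^p(1-(-1)^p)=-(1-(-1)^p)$). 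The only conceptual subtlety is the degenerate case $p=b=1$ noted in the remark: the apparently singular $\Phi(b,p,1)=\zeta(1)$ factor must cancel against the $k=q$ summand of the binomial double sum, and this cancellation is inherited from the analogous one already recorded in the remark to Theorem \ref{from zero to one integral2} under $b\mapsto a^n b$.
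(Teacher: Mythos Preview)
Your proposal is correct and follows essentially the same approach as the paper: substitute $x\mapsto a^n x^{2n}$ in Lemma~\ref{generating function}, multiply by $\ln^{q-1}(x)/(1-ax^2)$ using the geometric factorization of $1-a^n x^{2n}$, integrate over $(0,1)$ to obtain the integral--series relation, and then invoke Theorem~\ref{from zero to one integral2} with $b$ replaced by $a^n b$. Your write-up is in fact more detailed than the paper's own proof, which simply says ``applying the same approach as in the previous proof'' and records the resulting integral identity.
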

\begin{remark}
	The case $p=b=1$ is valid only when $n=1$, as the addition of the first term containing the factor $\Phi(b,p,1)$ and the $k=q$ term in the second summation simplifies to \[-2^{p-q-1}(1-{(-1)}^{p})\Phi(a,q,\tfrac12)\eta(p).\]
\end{remark}
\begin{proof} Replacing $x$ with $a^n x^{2n}$ in Lemma \ref{generating function}, and then applying the same approach as in the previous proof, we find
	\[\int_0^1\frac{\ln^{q-1}(x)\operatorname{Li}_{p}(a^nb x^{2n})}{1-ax^2}\mathrm{d} x=b{(-1)}^{q-1}(q-1)!\sum_{k=1}^\infty {(a^n)}^{k} \mathcal{H}_k^{(p)}(b)\sum_{j=1}^{n}\frac{a^{j-1}}{(2nk+2j-1)^q}.\]
	This integral appears in Theorem \ref{from zero to one integral2} with $b$ replaced by $a^nb$, ending the proof.
\end{proof}

\begin{corollary}
	Let $(a,b)=(1,\pm1)$ in Theorem \ref{bbp sum2}. Then for $p+q$ odd
	\begin{enumerate}
		\item[i)]
		\[\sum_{k=1}^\infty  H_k^{(p)}\sum_{j=1}^{n}\frac{1}{(2nk+2j-1)^q}=\frac12(1-{(-1)}^{p})\lambda(q)\zeta(p)\]
		\[+{(-2n)}^{p}\sum_{k=0}^{\lfloor\frac{p}{2}\rfloor}\binom{p+q-2k-1}{q-1}{(2n)}^{-2k}\zeta(2k)\lambda(p+q-2k)\]
		\[-\frac12 {(-2n)}^{-q}\sum_{j=1}^{n}\sum_{k=1}^{q}\binom{p+q-k-1}{p-1}{(-1)}^{k}\,\Theta(1,k,2j-1,2n)\times\]
		\[\Phi\left(1,p+q-k,\frac{2n-2j+1}{2n}\right);\]
		\item[ii)]
		\[\sum_{k=1}^\infty  \overline{H}_k^{(p)}\sum_{j=1}^{n}\frac{1}{(2nk+2j-1)^q}=\frac12(1-{(-1)}^{p})\lambda(q)\eta(p)\]
		\[+{(-2n)}^{p}\sum_{k=0}^{\lfloor\frac{p}{2}\rfloor}\binom{p+q-2k-1}{q-1}{(2n)}^{-2k}\eta(2k)\lambda(p+q-2k)\]
		\[-\frac12 {(-2n)}^{-q}\sum_{j=1}^{n}\sum_{k=1}^{q}\binom{p+q-k-1}{p-1}{(-1)}^{k}\,\Theta(-1,k,2j-1,2n)\times\]
		\[\Phi\left(-1,p+q-k,\frac{2n-2j+1}{2n}\right).\]
	\end{enumerate}
\end{corollary}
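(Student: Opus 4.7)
The plan is to obtain this corollary as a direct specialization of Theorem~\ref{bbp sum2} with $a=1$ and $b=\pm1$. Setting $a=1$ turns the hypothesis $a=(-1)^{p+q-1}$ into the parity condition ``$p+q$ odd'' stated in the corollary, and it kills all the $a^{j}, a^{n-1}, a^n$ factors in the theorem. The remaining work is to translate the Lerch transcendent values that happen to collapse onto named constants, using the identifications
\[
\Phi(1,s,\tfrac12)=2^s\lambda(s),\qquad \Phi(1,s,1)=\zeta(s),\qquad \Phi(-1,s,1)=\eta(s),
\]
together with $\mathcal{H}_k^{(p)}(1)=H_k^{(p)}$ and $\mathcal{H}_k^{(p)}(-1)=\overline{H}_k^{(p)}$. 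The inner Lerch values $\Phi(\pm1,p+q-k,(2n-2j+1)/(2n))$ and $\Theta(\pm1,k,2j-1,2n)$ do not admit a simpler closed form in general, so they are simply carried over unchanged.

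For part (i), I substitute $b=1$, so that $a^{n}b=1$. The first term of Theorem~\ref{bbp sum2} becomes $2^{-q-1}(1-(-1)^p)\cdot2^q\lambda(q)\cdot\zeta(p)=\tfrac12(1-(-1)^p)\lambda(q)\zeta(p)$. The third term is immediate after replacing $\Phi(1,2k,1)$ by $\zeta(2k)$ and $\Phi(1,p+q-2k,\tfrac12)$ by $2^{p+q-2k}\lambda(p+q-2k)$; the combined constant to track is
\[
-(-2)^{-q}\,n^{p}\cdot 2^{p+q-2k}\,n^{-2k}= -(-1)^{q}\,2^{p-2k}n^{p-2k}= -(-1)^{q}(2n)^{p}(2n)^{-2k}.
\]
Using the parity condition $p+q$ odd one has $-(-1)^q=(-1)^p$, and hence $-(-1)^q(2n)^p=(-2n)^p$, which matches the coefficient written in the corollary. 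The last term carries over verbatim after noting $a^j=1$ and $a^nb=1$. Part~(ii) follows by the same computation with $b=-1$ and $a^nb=-1$, replacing every $\zeta$ that came from $\Phi(1,\cdot,1)$ by $\eta$ coming from $\Phi(-1,\cdot,1)$.

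The main (and really only) obstacle is bookkeeping: making sure that the two sign/power combinations $(-2)^{-q}\,n^{p}\,2^{p+q-2k}$ and $(-2n)^{-q}\,a^{j}\,(-1)^{k}$ from Theorem~\ref{bbp sum2} come out with the coefficients $(-2n)^{p}(2n)^{-2k}$ and $(-2n)^{-q}(-1)^{k}$ displayed in the corollary, and that the application of the parity hypothesis $p+q$ odd produces exactly the right residual sign. Once this verification is made for both $b=1$ and $b=-1$, the identities follow with no further work.
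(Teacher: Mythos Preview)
Your proposal is correct and is exactly the intended argument: the paper states this corollary without proof as a direct specialization of Theorem~\ref{bbp sum2} with $a=1$, and your bookkeeping (using $\Phi(1,s,\tfrac12)=2^s\lambda(s)$, $\Phi(\pm1,s,1)=\zeta(s)$ or $\eta(s)$, and the parity $p+q$ odd to convert $-(-1)^q(2n)^p$ into $(-2n)^p$) is precisely what is needed. One cosmetic slip: what you call ``the third term'' is actually the second displayed term of Theorem~\ref{bbp sum2}, but the computation you attach to it is correct.
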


\begin{corollary}
	Let $(a,b)=(-1,\pm1)$ in Theorem \ref{bbp sum2}. Then for $p+q$ even
	\begin{enumerate}
		\item[i)]
		\[\sum_{k=1}^\infty ({(-1)}^{n})^{k}\, H_k^{(p)}\sum_{j=1}^{n}\frac{{(-1)}^{j-1}}{(2nk+2j-1)^q}=\frac12(1-{(-1)}^{p})\beta(q)\zeta(p)\]
		\[+{(-1)}^{n}{(-2n)}^{p}\sum_{k=0}^{\lfloor\frac{p}{2}\rfloor}\binom{p+q-2k-1}{q-1}{(2n)}^{-2k}\,\Phi({(-1)}^{n},2k,1)\beta(p+q-2k)\]
		\[+\frac12 {(-2n)}^{-q}\sum_{j=1}^{n}\sum_{k=1}^{q}\binom{p+q-k-1}{p-1}{(-1)}^{k+j}\,\Theta({(-1)}^{n},k,2j-1,2n)\times\]
		\[\Phi\left(1,p+q-k,\frac{2n-2j+1}{2n}\right);\]
		\item[ii)]
		\[\sum_{k=1}^\infty ({(-1)}^{n})^{k}\, \overline{H}_k^{(p)}\sum_{j=1}^{n}\frac{{(-1)}^{j-1}}{(2nk+2j-1)^q}=\frac12(1-{(-1)}^{p})\beta(q)\eta(p)\]
		\[+{(-1)}^{n}{(-2n)}^{p}\sum_{k=0}^{\lfloor\frac{p}{2}\rfloor}\binom{p+q-2k-1}{q-1}{(2n)}^{-2k}\,\Phi({(-1)}^{n-1},2k,1)\beta(p+q-2k)\]
		\[+\frac12{(-2n)}^{-q}\sum_{j=1}^{n}\sum_{k=1}^{q}\binom{p+q-k-1}{p-1}{(-1)}^{k+j}\,\Theta({(-1)}^{n-1},k,2j-1,2n)\times\]
		\[\Phi\left(-1,p+q-k,\frac{2n-2j+1}{2n}\right).\]
	\end{enumerate}
\end{corollary}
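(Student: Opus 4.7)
The strategy is to specialize Theorem~\ref{bbp sum2} directly to $(a,b)=(-1,\pm 1)$. Because the hypothesis $a=(-1)^{p+q-1}$ becomes $-1=(-1)^{p+q-1}$, the parity condition forces $p+q$ to be even, matching the assumption of the corollary. The left-hand side then simplifies trivially: $a^n=(-1)^n$, so $(a^n)^k = ((-1)^n)^k$; $a^{j-1}=(-1)^{j-1}$; and $\mathcal{H}_k^{(p)}(b)$ collapses to $H_k^{(p)}$ when $b=1$ and to $\overline{H}_k^{(p)}$ when $b=-1$.

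Next I would simplify each of the three pieces on the right-hand side using the identity $\Phi(-1,s,\tfrac{1}{2})=2^{s}\beta(s)$, which was recorded in the introduction. In the lead term, $2^{-q-1}(1-(-1)^{p})\Phi(-1,q,\tfrac12)\Phi(b,p,1)$ collapses to $\tfrac12(1-(-1)^{p})\beta(q)\Phi(b,p,1)$, giving $\tfrac12(1-(-1)^{p})\beta(q)\zeta(p)$ for $b=1$ and $\tfrac12(1-(-1)^{p})\beta(q)\eta(p)$ for $b=-1$. In the single-index summation, the same identity replaces $\Phi(-1,p+q-2k,\tfrac12)$ by $2^{p+q-2k}\beta(p+q-2k)$, and the prefactor $-a^{n-1}(-2)^{-q}n^{p}\cdot 2^{p+q-2k}n^{-2k}$ should regroup into the cleaner form $(-1)^{n}(-2n)^{p}(2n)^{-2k}$; the factor $\Phi(a^{n}b,2k,1)$ specializes to $\Phi((-1)^{n},2k,1)$ when $b=1$ and to $\Phi((-1)^{n-1},2k,1)$ when $b=-1$. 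Finally, in the double summation the leading factor $a=-1$ converts $-\tfrac12 a(-2n)^{-q}$ into $+\tfrac12(-2n)^{-q}$, and $a^{j}(-1)^{k}$ becomes $(-1)^{j+k}$; the arguments of $\Theta$ and of the last $\Phi$ specialize to $\Theta((-1)^{n}b,k,2j-1,2n)$ and $\Phi(b,p+q-k,(2n-2j+1)/(2n))$, with the sign in each of these read off from the choice of $b$.

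The only nontrivial bookkeeping step is the recombination in the single-index summation: checking that $-a^{n-1}(-2)^{-q}n^{p}\cdot 2^{p+q-2k}n^{-2k}$ genuinely equals $(-1)^{n}(-2n)^{p}(2n)^{-2k}$. Expanding both sides reduces this to the identity $(-1)^{n+q}=(-1)^{n+p}$, i.e.\ $(-1)^{p}=(-1)^{q}$, which is exactly the parity hypothesis that $p+q$ is even; without that hypothesis the two sides would differ by a sign. Once this bookkeeping is verified, parts (i) and (ii) of the corollary follow immediately by selecting $b=1$ or $b=-1$ in the simplified formula.
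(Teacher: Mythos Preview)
Your proposal is correct and follows exactly the paper's approach: the corollary is obtained by direct specialization of Theorem~\ref{bbp sum2} to $a=-1$, $b=\pm1$, and the paper offers no separate argument beyond this. Your bookkeeping verification that $-a^{n-1}(-2)^{-q}n^{p}\cdot 2^{p+q-2k}n^{-2k}=(-1)^{n}(-2n)^{p}(2n)^{-2k}$ hinges on $(-1)^{p}=(-1)^{q}$ is the right observation, and it is precisely the parity constraint $p+q$ even that the hypothesis $a=(-1)^{p+q-1}=-1$ imposes.
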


\begin{remark} To find a closed form for any combination of 
	$(p,q,a,b,n)$ from a given formula, we employ the following \textit{Mathematica} code:
	\begin{verbatim}
		Unprotect[LerchPhi];
		LerchPhi[1, 0, 1] = -1/2;
		Protect[LerchPhi];
		Theta[c_, q_, s_, r_] := 
		LerchPhi[c, q, s/r] + c (-1)^q LerchPhi[c, q, (r - s)/r];
		Theta[1, 1, s_, r_] := Pi Cot[Pi s/r]; 
		f[p_, q_, n_, a_, b_] := formula expression /. 
		Zeta[s_, r_] -> (-1)^s PolyGamma[s - 1, r]/(s - 1)!;\end{verbatim}
	The last line of the code is applied to convert the Hurwitz zeta function into the polygamma function, if the reader finds this conversion more suitable.
\end{remark}


\end{document}